\numberwithin{equation}{section}
\numberwithin{figure}{section}
\theoremstyle{plain}
\newtheorem{thm}{\protect\theoremname}
\theoremstyle{plain}
\newtheorem{lem}[thm]{\protect\lemmaname}
\theoremstyle{plain}
\newtheorem{cor}[thm]{\protect\corollaryname}
\providecommand{\corollaryname}{Corollary}
\providecommand{\lemmaname}{Lemma}
\providecommand{\theoremname}{Theorem}
\begin{document}
\title[Compactness of complete $k$-curavture metrics]{Compactness theorem of complete $k$-curvature manifolds with isolated
singularities}
\author{Wei Wei}
\address{Shanghai Center for Mathematical Science, Fudan University, Shanghai,
China}
\email{weisx001@mail.ustc.edu.cn}
\begin{abstract}
In this paper we prove that the set of metrics conformal to the standard
metric on $\mathbb{S}^{n}\backslash\{p_{1},\cdots,p_{l}\}$ is locally compact
in $C^{m,\alpha}$ topology for any $m>0$, whenever the metrics have
constant $\sigma_{k}$ curvature and the $k$-Dilational Pohozaev
invariants have positive lower bound for $k<n/2$. Here the $k$-Dilational
Pohozaev invariants come from the Kazdan-Warner type identity
for the $\sigma_{k}$ curvature, which is derived by Viaclovsky \cite{Viac2000}
and Han \cite{H1}. When $k=1$, Pollack \cite{Pollack} proved the
compactness results for the complete metrics of constant positive
scalar curvature on $\mathbb{S}^{n}\backslash\{p_{1},\cdots,p_{l}\}$.
\end{abstract}

\subjclass[2000]{53A30·53C20}

\maketitle

\section{introduction}

Let $(M^{n},g)$ be a Riemannian manifold with Ricci curvature $Ric$.
Let $R$ and $A$ be the corresponding Scalar curvature and Schouten
tensor defined by 
\begin{align*}
A & =\frac{1}{n-2}(\text{{\rm Ric}}-\frac{R}{2(n-1)}g),\\
R & =g^{ij}{\rm Ric}_{ij},
\end{align*}
respectively. Let $\{\lambda(A)\}_{i=1}^{n}$ be the set of eigenvalues
of $A$ with respect to $g$. Define

\[
\sigma_{k}(g^{-1}A_{g}):=\sum_{1\le i_{1}\cdots<i_{k}\le n}\lambda_{i_{1}}\lambda_{i_{2}}\cdots\lambda_{i_{k}}.
\]
Here $\sigma_{1}(\lambda(A))={\rm Tr}A=\frac{1}{2(n-1)}R$. 

We sometimes call $(M^n,g)$ a $k$-curvature manifold if the following
equation holds
\begin{equation}
\sigma_{k}(g^{-1}A_{g})=c.\label{eq:equation}
\end{equation}

Like the classical Yamabe problem, Viaclovsky proposed the $\sigma_{k}$
Yamabe problem as follows. In the conformal class $[g]=\{e^{2u}g|u\in C^{\infty}(M)\}$,
can we find $g_{1}\in[g]$ such that $\sigma_{k}(g_{1}^{-1}A_{g_{1}})=c$
for some positive constant $c$?

Define a positive cone condition  
\[
\Gamma_{k}^{+}:=\{\lambda=(\lambda_{1},\cdots,\lambda_{n}),\ s.t.\ \sigma_{1}(\lambda)>0,\cdots,\,\sigma_{k}(\lambda)>0\}.
\]
Note that when $g^{-1}A_{g}\in\Gamma_{k}^{+}$, equation (\ref{eq:equation})
is a fully nonlinear elliptic partial differential equation for $k>1.$
In the past decades, the $k$-Yamabe problem for smooth manifolds
has been widely studied  and mathematicians have
established many parallel results to the classical Yamabe problem.
For more information, we refer to \cite{CGY1,CGY2,GV0,GV4,GW1,L1,LL1,LL2,LN,STW,TW,W1}
and references therein. 

The singular sets of locally conformally flat metrics with positive
$\sigma_{k}$ curvatures are widely studied. For $k=1$, Schoen and
Yau have proved that $\dim(\partial\Omega)<(n-2)/2$ if $\Omega\subset \mathbb{S}^{n}(n\ge5)$
admits a complete conformal metric $g$ with constant curvature. In
\cite{CHY2} Chang-Han-Yang have proved that if $\Omega\subset \mathbb{S}^{n}(n\ge5)$
admits a complete, conformal metric $g$ such that $\sigma_{1}(A_{g})\ge c>0,\sigma_{2}(A_{g})\ge0$
and $|R_{g}|+|\nabla_{g}R|_{g}\le c_{0}$, then $\dim(\partial\Omega)<(n-4)/2.$
For $1\le k<n/2$, Gonz\'{a}lez \cite{G1} and Guan-Lin-Wang
\cite{GLW} have proved that under some natural assumptions for $\sigma_{k},$
$\dim(\partial\Omega)<(n\text{\textminus}2k)/2.$ When $k>n/2$, Gonz\'{a}lez
\cite{G1} has proved that there exists no complete manifold with
$\sigma_{1}(g^{-1}A_{g}),\cdots\sigma_{k}(g^{-1}A_{g})>C_{0}$ on
subdomain of $\mathbb{S}^{n}$. 

For the singular set $P=\{p_{1},\cdots,p_{q}\}$, a classical problem
is the existence of complete metrics on $M$ which is conformal to $g$ on
$M\backslash P$ with constant scalar curvature.
This problem has been widely studied and we address the readers to the classical
paper of Schoen \cite{Sc2} and its references. The existence of complete
manifolds with constant $\sigma_{k}$ curvature attracted many mathematicians.
Li \cite{L} has proved that on $\mathbb{R}^{n}\backslash\{0\}$,
the conformal factor is radial for $1\le k\le n$. Chang-Han-Yang \cite{CHY} have proved that 
there exists a complete metric with constant positive $k$-curvature for $k<n/2$ on $\mathbb{R}^{n}\backslash\{0\}$.

Catino-Mazzieri \cite{MN} constructed complete metrics with constant
positive $k$-curvature on connected sum $M_{1}\sharp M_{2}$ for
$k<\frac{n}{2}$ and Mazzieri-Segatti \cite{LSe} constructed
a complete locally conformally flat metric with constant positive
$k$-curvature for $4\le2k<n$. In \cite{SS} Santos has proved that
when $k=2$, $n\ge5$, there exist complete metrics on $M\backslash P$
with constant positive $k$-curvature, where $(M,g_{0})$ has constant
$k$-curvatures and $\nabla_{g_{0}}^{j}W_{g_{0}}(P)=0$ for $j=0,1,\cdots,[\frac{n-4}{2}]$.
It is natural to study the compactness
of the complete metrics with positive constant $k$-curvature on
$\mathbb{S}^{n}\backslash P:=\Omega.$ When $k=n/2$, the related results are
not so abundant. For $k=2$ and $n=4$, the authors \cite{FW} have given
a necessary condition for the existence of a conic 4-sphere with positive
constant 2-curvature. For $k>\frac{n}{2}$, singularities have been
studied in \cite{L,GV3,TW09}. For non-isolated singularities, we
refer to \cite{GMM} for recent developments. 

In this paper, we want to prove that the compactness
of the complete metrics on $\mathbb{S}^{n}\backslash P$ holds under some condition.We state our
main theorem as below:
\begin{thm}
\label{thm:Main thm}Let $P=\{p_{1},\cdots,p_{q}\}\subset \mathbb{S}^{n}$
be a set of  distinct points and we assume that $k<n/2$. Suppose that
there exists a sequence $g_{i}=u_{i}^{\frac{4}{n-2}}g_{0}$ on $\mathbb{S}^{n}\backslash P$
with positive constant $\sigma_{k}$ curvature. If there exists a
point $p\in P$ such that the k-Dilational Pohozaev invariants $D_{k}(g_{i},p)$
are uniformly bounded away from $0$, then there exists a subsequence
of $\{u_{i}\}$ converging to a positive solution in $C^{\infty}$ on any compact subsets
of $\mathbb{S}^{n}\backslash P$. Moreover, the corresponding
metric is complete on $\mathbb{S}^{n}\backslash P$. Here $g_{0}$ is the standard
sphere metric.
\end{thm}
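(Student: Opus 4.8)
\emph{Proof proposal.} The plan is to reduce the theorem to a uniform two-sided bound for the conformal factors on compact subsets of $\Omega:=\mathbb{S}^{n}\backslash P$, and then to invoke the interior regularity theory for the $\sigma_{k}$-equation. After replacing each $u_{i}$ by a suitable positive constant multiple we may assume that all $g_{i}$ have the same constant $\sigma_{k}$-curvature $c>0$; this is a constant conformal rescaling, so it changes $D_{k}(g_{i},p)$ only by a bounded factor and preserves the hypothesis. The claim to be proved is then: for every compact $K\subset\Omega$ there is $C_{K}\ge 1$, independent of $i$, with $C_{K}^{-1}\le u_{i}\le C_{K}$ on $K$. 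Since $g_{i}^{-1}A_{g_{i}}\in\Gamma_{k}^{+}$ makes the equation $\sigma_{k}(g_{i}^{-1}A_{g_{i}})=c$ elliptic, once this bound is available the interior $C^{1,1}$ and Evans--Krylov/Schauder estimates for the $\sigma_{k}$-equation (see \cite{GW1,STW,TW,LL1}) yield uniform $C^{m,\alpha}(K)$ bounds for all $m$; Arzel\`a--Ascoli and a diagonal argument then extract a subsequence of $\{u_{i}\}$ converging in $C^{\infty}_{loc}(\Omega)$ to some $u_{\infty}\ge 0$, and the lower bound forces $u_{\infty}>0$. Hence $g_{\infty}:=u_{\infty}^{4/(n-2)}g_{0}$ solves $\sigma_{k}(g_{\infty}^{-1}A_{g_{\infty}})=c$ on $\Omega$, and since $\{\sigma_{k}>0\}\cap\overline{\Gamma_{k}^{+}}=\Gamma_{k}^{+}$ by the Maclaurin inequalities, $g_{\infty}^{-1}A_{g_{\infty}}\in\Gamma_{k}^{+}$ as well.

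Everything thus rests on the uniform bound. The Harnack inequality for the $\sigma_{k}$-equation (\cite{LL1,LL2}) gives $\sup_{B_{r}(x)}u_{i}\le C\,\inf_{B_{r}(x)}u_{i}$ on every ball $B_{2r}(x)\subset\Omega$; chaining these along a fixed finite string of balls in $\Omega$ reduces the problem to bounding $u_{i}$ from above and below on one fixed geodesic sphere $S_{\rho_{0}}(p)$ around the distinguished puncture, the two-sided bound then propagating to every compact subset of $\Omega$ with $i$-independent constants (and, as a byproduct, ruling out blow-up at interior points: such a point would lie in $\Omega$, and Harnack chaining would force $u_{i}\to\infty$ on $S_{\rho_{0}}(p)$). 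To bound $u_{i}$ on $S_{\rho_{0}}(p)$ I would use the asymptotics of isolated singularities of the $\sigma_{k}$-equation: for each fixed $i$, in conformal normal coordinates at $p$, $u_{i}$ is asymptotic as $x\to p$ to a radial ``$\sigma_{k}$-Delaunay'' solution of the type classified by Li \cite{L} --- the asymptotic radial symmetry being the $\sigma_{k}$-analogue of the Caffarelli--Gidas--Spruck theorem --- and by the Kazdan--Warner/Pohozaev identity of Viaclovsky \cite{Viac2000} and Han \cite{H1} the neck parameter $\varepsilon_{i}$ of this profile is a definite monotone function of $D_{k}(g_{i},p)$ that vanishes only as $\varepsilon_{i}\to 0$. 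Thus $D_{k}(g_{i},p)\ge c_{0}>0$ keeps $\varepsilon_{i}$ bounded away from the degenerate value, while the $\sigma_{k}$-Delaunay profiles are a priori uniformly bounded above; the delicate point, which I expect to be the main obstacle, is to turn this into a bound that is \emph{uniform in $i$} on the fixed sphere $S_{\rho_{0}}(p)$. One argues by contradiction: if $\sup_{S_{\rho_{0}}(p)}|x-p|^{(n-2)/2}u_{i}\to\infty$, or the corresponding infimum tends to $0$, a renormalization produces in the limit either a standard bubble on $\mathbb{R}^{n}$, excluded by the Liouville theorem for $\sigma_{k}$ (\cite{LL1,LL2,CGY1}), or a radial singular solution on $\mathbb{R}^{n}\backslash\{0\}$ with degenerate neck, excluded because $D_{k}$ is lower semicontinuous under the renormalized convergence and $D_{k}(g_{i},p)\ge c_{0}$. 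It is precisely here that the fully nonlinear character of $\sigma_{k}$ for $k>1$ makes the analysis considerably harder than the semilinear case $k=1$ treated by Pollack \cite{Pollack} and Schoen \cite{Sc2}, since the singularity asymptotics and the Liouville theorem must be run for a fully nonlinear equation.

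It remains to check that $g_{\infty}$ is complete on $\Omega$, i.e.\ that $u_{\infty}$ is genuinely singular at every $p_{j}\in P$. At the distinguished puncture this is immediate: the flux over a fixed small sphere $S_{\rho}(p)\subset\Omega$ that defines $D_{k}(\cdot,p)$ depends continuously on the metric in $C^{\infty}(S_{\rho}(p))$, so by the $C^{\infty}_{loc}$ convergence and the Pohozaev identity \cite{Viac2000,H1} we obtain $D_{k}(g_{\infty},p)\ge\liminf_{i}D_{k}(g_{i},p)\ge c_{0}>0$; since a removable singularity would force $D_{k}(g_{\infty},p)=0$ (the $\sigma_{k}$ removable-singularity statement under the cone condition, cf.\ \cite{G1,GLW}), $u_{\infty}$ must blow up at $p$, so $g_{\infty}$ is complete there. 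For the remaining punctures one argues as in Pollack \cite{Pollack}: the lower bound $|x-p|^{(n-2)/2}u_{i}\ge C^{-1}$ obtained near $p$ propagates through $\Omega$ by Harnack chaining, and feeding this into the Pohozaev balancing over $\mathbb{S}^{n}\backslash\bigcup_{j}B_{\rho}(p_{j})$ rules out any singularity being lost in the limit, so $\liminf_{x\to p_{j}}|x-p_{j}|^{(n-2)/2}u_{\infty}(x)>0$ for all $j$ and $g_{\infty}$ is complete on all of $\mathbb{S}^{n}\backslash P$. Together with the positivity of $\sigma_{k}(g_{\infty}^{-1}A_{g_{\infty}})$ and the cone condition recorded above, this completes the proof.
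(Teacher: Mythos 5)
Your overall architecture (two-sided bounds on compact sets, the flux $D_{k}$ preventing degeneration, removability giving completeness) matches the paper's, but both of the key analytic steps as you describe them have genuine gaps. First, the upper bound. Your plan is to get it by Harnack chaining from a fixed sphere near $p$, but the Harnack inequality for the fully nonlinear equation is itself a consequence of the local gradient estimate (Theorem \ref{thm:aprior estimates}), whose constant depends on $\sup u$; so you cannot chain Harnack before you have the upper bound --- the argument is circular. The paper instead proves the uniform bound $u_{i}\le C\,d(x,P)^{(2-n)/2}$ directly (Theorem \ref{thm:Uniform decay}) by a blow-up argument, and the contradiction there is \emph{not} supplied by the Liouville theorem alone: the blow-up limit \emph{is} the Liouville solution (the round sphere), and what rules it out is that every small geodesic ball in $\mathbb{S}^{n}\backslash P$ has geodesically convex boundary with respect to the complete metrics $g_{i}$ (Theorem \ref{thm:Sch}, proved by the moving plane method following Schoen and Gonz\'alez--Mazzeo--Sire), a property the round sphere violates. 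This moving-plane/convexity ingredient is absent from your proposal, and saying the bubble is ``excluded by the Liouville theorem'' does not close the argument.

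Second, the lower bound. Your mechanism --- asymptotic Delaunay profiles for each $u_{i}$ near $p$, with ``$D_{k}$ lower semicontinuous under the renormalized convergence'' --- is not what makes the argument work, and as stated it is problematic: if $\varepsilon_{i}=\inf u_{i}\to0$ the renormalized functions $v_{i}=\varepsilon_{i}^{-1}u_{i}$ satisfy $\sigma_{k}=\binom{n}{k}(\tfrac12)^{k}\varepsilon_{i}^{4k/(n-2)}\to0$, i.e.\ the limit solves the \emph{degenerate} equation $\sigma_{k}=0$, so the Delaunay classification and Theorem \ref{Holder regularity} are not directly available for the limit. The paper's actual mechanism is purely a scaling identity evaluated on a \emph{fixed} hypersurface $\Sigma_{t_{0}}$ in the smooth region, where the uniform $C^{2}$ bounds for $v_{i}$ hold: $D_{k}(g_{i},p)=\varepsilon_{i}^{(2n-4k)/(n-2)}\,D_{k}(\bar g_{i},p)$ with $\bar g_{i}=\varepsilon_{i}^{-4/(n-2)}g_{i}$ and $D_{k}(\bar g_{i},p)$ bounded, so $D_{k}(g_{i},p)\to0$ precisely because $k<n/2$ makes the exponent positive. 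Your write-up never identifies where the hypothesis $k<n/2$ enters, which is a symptom of this missing step (it is needed both here and for the completeness of each $g_{i}$, which Theorem \ref{thm:Sch} presupposes). Your completeness discussion at the distinguished puncture (flux continuity plus non-removability) does match the paper; the ``Pohozaev balancing'' claim for the remaining punctures is vaguer than, and not needed for, what the paper actually asserts via \cite{CHY,HLT-1}. The relation between $D_{k}$ and the neck parameter that you invoke is correct and is the content of Lemma \ref{lem:k-invariant}, but it is used to interpret the invariant, not to run the compactness argument.
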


The nonvanishing $k$-Dilational Pohozaev invariants are used to prove
that the singularities are non-removable. One nonvanishing $k$-Dilational
Pohozaev invariant is enough to imply the completeness of the limiting
metric. The definition of $k$-Dilational Pohozaev invariants is the
natural extension of the Pohozaev invariants in the classical Yamabe
problem, which will be introduced in section 2. By Han-Li-Teixeira's
theorem \cite{HLT-1} and Chang-Han-Yang's radial classification \cite{CHY},
the metric on $\mathbb{S}^{n}\backslash P$ with positive constant $\sigma_{k}$
curvature is complete for $k<\frac{n}{2}$. But for $k=\frac{n}{2},$
the metric is conic. 

This paper is organized as follows. In section 2, we give some basic
notations and introduce the definition of the $k$-Dilational Pohozaev
invariants. In section 3, we give the growth estimate of the solution
near the singularity by the classical blow up analysis and the moving
plane method. In section 4, using the $k$-Dilational Pohozaev invariants,
we prove Theorem \ref{thm:Main thm} by the compactness argument.
In appendix, we will compute the value of the $k$-Dilational Pohozaev
invariants. In this paper, the constant $C,c$ may differ from line
to line without confusion. We always refer $g_{E}$ to the standard
Euclidean metric, $g_{0}$ to the standard sphere metric and $g_{u}=u^{\frac{4}{n-2}}g_{0}$.

\textit{Acknowledge:} The author would like to thank Professor Hao
Fang for helpful discussions and  Professor
Xinan Ma for constant support. The author also would like to thank
the referee for the helpful comments. 

\section{Basic Notations}

In this section, following the work of Pollack \cite{Pollack}, we
can define the $k$-Dilational Pohozaev invariants as the classical
Dilational Pohozaev invariants, which are derived from the Kazdan-Warner
type identity.

The following Kazdan-Warner type identity is proved by Viaclovsky
\cite{Viac2000} and Han\cite{H1}, which is significant in this paper.

\begin{thm}
\label{thm:k-z}\cite{H1,Viac2000}Let $(N^{n},g)$ be a locally conformally
flat $n$-dimensional manifold with boundary $\partial N$. For any
conformal Killing field $X$ on $N^{n},$ we have 
\[
\frac{n-k}{n}\int_{N}<X,\nabla\sigma_{k}(g^{-1}A)>dv_{g}=\int_{\partial N}\mathring{H}_{a}^{b}\nu_{b}X^{a}d\sigma_{g},
\]
where 
\[
\mathring{H}_{a}^{b}=H_{a}^{b}-\frac{H_{c}^{c}}{n}\delta_{a}^{b},
\]
 
\[
H_{a}^{b}=T_{k-1,c}^{b}A_{a}^{c},
\]
 
\[
T_{k-1,c}^{b}=\frac{1}{(k-1)!}\sum_{i_{1},\cdots i_{k-1},j_{1},\cdots j_{k-1}=1}^{n}\delta\left(\begin{array}{ccc}
i_{1} & \cdots i_{k-1} & b\\
j_{1} & \cdots j_{k-1} & c
\end{array}\right)A_{i_{1}}^{j_{1}}\cdots A_{i_{k-1}}^{j_{k-1}},
\]
 $\nu$ is the unit outward normal vector to $\partial N$, $dv_{g}$,
$d\sigma_{g}$ are the volume and surface measure respectively.
\end{thm}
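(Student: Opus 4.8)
The plan is to prove the Kazdan--Warner type identity by a direct integration-by-parts argument on $N$, exploiting the divergence structure of the Newton tensor $T_{k-1}$ together with the conformal Killing condition on $X$. First I would recall the two standard facts about the $(k-1)$-st Newton tensor $T_{k-1}$ associated to $A$: in the locally conformally flat setting (equivalently, when $A$ is ``Codazzi''), $T_{k-1}$ is divergence-free, $\nabla_b T_{k-1,c}^{\,b}=0$; and the contraction identity $T_{k-1,c}^{\,b}A_{a}^{c}$ — which is exactly $H_a^b$ in the statement — has trace $H_c^c = k\,\sigma_k(g^{-1}A)$. These are the algebraic backbone of the whole computation, and the local conformal flatness hypothesis enters precisely to guarantee the Codazzi property that makes $T_{k-1}$ divergence-free.

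Next I would compute $\operatorname{div}(H\cdot X)$, i.e. $\nabla_b(H_a^b X^a)$, and integrate over $N$. By the divergence theorem the left side becomes the boundary integral $\int_{\partial N} H_a^b \nu_b X^a\, d\sigma_g$. On the other hand, expanding $\nabla_b(H_a^b X^a) = (\nabla_b H_a^b) X^a + H_a^b \nabla_b X^a$. For the first term, since $T_{k-1}$ is divergence-free one is left with $T_{k-1,c}^{\,b}(\nabla_b A_a^c)X^a$, and the Codazzi symmetry of $\nabla A$ together with the definition of $\sigma_k$ repackages the contraction against $X^a$ as a multiple of $\langle X, \nabla \sigma_k\rangle$ — this is where the factor involving $k$ and $n$ begins to appear. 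For the second term, I would split $\nabla_b X^a$ into its symmetric and antisymmetric parts; the antisymmetric part pairs with the symmetric tensor $H$ to give zero, and the symmetric part, by the conformal Killing equation $\nabla_{(b}X_{a)} = \tfrac{1}{n}(\operatorname{div}X)g_{ab}$, contracts $H_a^b$ into its trace $\tfrac{1}{n}(\operatorname{div}X)H_c^c = \tfrac{k}{n}(\operatorname{div}X)\sigma_k$. Finally I would replace $H$ by its trace-free part $\mathring H$ on the boundary: the extra trace piece $\tfrac{H_c^c}{n}\delta_a^b\nu_b X^a = \tfrac{k}{n}\sigma_k\langle X,\nu\rangle$ is exactly compensated by the interior term $\tfrac{k}{n}\int_N (\operatorname{div}X)\sigma_k$ after one more application of the divergence theorem to $\operatorname{div}(\sigma_k X)$, leaving only the clean combination $\langle X,\nabla\sigma_k\rangle$ with coefficient $1 - \tfrac{k}{n} = \tfrac{n-k}{n}$ on the interior side.

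The main obstacle I anticipate is bookkeeping the two Codazzi-type manipulations carefully: showing $\nabla_b T_{k-1,c}^{\,b}=0$ in the locally conformally flat case (which follows from the antisymmetrization in the generalized Kronecker delta once $\nabla_i A_{jk}$ is totally symmetric in the relevant indices), and verifying that $T_{k-1,c}^{\,b}(\nabla_b A_a^c)$ contracted with $X^a$ really collapses to a pure gradient $\nabla_a \sigma_k$ rather than some other divergence-type term — this requires the full symmetry of $\nabla A$, not merely $A$ being Codazzi, and is the step where a sign or combinatorial factor is easiest to lose. Once those two identities are in hand, the rest is assembling divergence theorems. I would also take care that $X$ being merely conformal Killing (not Killing) is used only through $\nabla_{(a}X_{b)} = \tfrac1n(\operatorname{div}X)g_{ab}$, so no additional regularity or curvature hypothesis on $X$ is needed.
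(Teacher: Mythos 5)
Your proposal is correct and follows exactly the route the paper indicates: the paper does not reprove this cited result but records the pointwise divergence identity $\tfrac{n-k}{n}\langle X,\nabla\sigma_{k}(g^{-1}A)\rangle=\nabla_{b}(X^{a}\mathring{H}_{a}^{b})$, which is precisely what your two Codazzi-type facts (divergence-freeness of $T_{k-1}$ and $T_{k-1,c}^{\,b}\nabla_{a}A_{b}^{c}=\nabla_{a}\sigma_{k}$), the conformal Killing contraction, and the trace correction assemble into before applying the divergence theorem.
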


The Kazdan-Warner type identity is derived from the divergence structure
as below
\[
\frac{n-k}{n}<X,\nabla\sigma_{k}(g^{-1}A)>=\nabla_{b}(X^{a}\mathring{H}_{a}^{b}).
\]
 For $k=1$, it has been proved by Schoen \cite{Sc2} and applied
to prove the existence of the singular Yamabe problem. The  Kazdan-Warner identity was also
used to the Yamabe problem on star-shaped domain by Pohozaev, which
is  called Pohozaev identity. For $k=n$, we refer to \cite{H1} for
the Kazdan-Warner type identity, which plays a significant role in
the Yamabe problem. 

If $g=g_{u}=u^{\frac{4}{n-2}}g_{b}$ holds for a positive function
$u$ and a smooth metric $g_{b}$, we have

\[
A=A_{g_{b}}-\frac{2}{n-2}u^{-1}\nabla_{g_{b}}^{2}u+\frac{2n}{(n-2)^{2}}u^{-2}du\otimes du-\frac{2}{(n-2)^{2}}u^{-2}|du|_{g_{b}}^{2}g_{b}.
\]

The corresponding equation of (\ref{eq:equation}) is 
\begin{equation}
\sigma_{k}\bigg(g_{b}^{-1}(A_{g_{b}}-\frac{2}{n-2}u^{-1}\nabla_{g_{b}}^{2}u+\frac{2n}{(n-2)^{2}}u^{-2}du\otimes du-\frac{2}{(n-2)^{2}}u^{-2}|du|_{g_{b}}^{2}g_{b})\bigg)=\binom{n}{k}(\frac{1}{2})^{k}u^{\frac{4k}{n-2}}.\label{eq:equation2}
\end{equation}
Here we always normalize the constant $c$ to be $\binom{n}{k}(\frac{1}{2})^{k}.$ 

When $g_{b}=g_{E}$, (\ref{eq:equation2}) becomes 

\begin{equation}
\sigma_{k}(-\frac{2}{n-2}u^{-1}\nabla_{E}^{2}u+\frac{2n}{(n-2)^{2}}u^{-2}du\otimes du-\frac{2}{(n-2)^{2}}u^{-2}|du|_{g_{E}}^{2}g_{E}))=\binom{n}{k}(\frac{1}{2})^{k}u^{\frac{4k}{n-2}}.\label{eq:euclidean equation}
\end{equation}

Consider $g=\overline{u}(t)^{\frac{4}{n-2}}(dt^{2}+d\theta^{2})$, where
$\sigma_{k}(g^{-1}A_{g})=\binom{n}{k}(\frac{1}{2})^{k}$,
and $d\theta^{2}$ is the standard $(n-1)$-sphere metric. As indicated
in Theorem 1 in \cite{CHY}, $\bar{u}$ satisfies 

\begin{equation}
\frac{1}{2}=\left(1-(\frac{2}{n-2})^{2}(\frac{u_{t}}{u})^{2}\right)^{k-1}\left(\frac{k}{n}\frac{2}{n-2}(-\frac{u_{tt}}{u}+\frac{u_{t}^{2}}{u^{2}})+(\frac{1}{2}-\frac{k}{n})\bigg(1-(\frac{2}{n-2})^{2}\frac{u_{t}^{2}}{u^{2}}\bigg)\right)u^{-2k\frac{2}{n-2}},\label{eq:ode in cylinder}
\end{equation}
and 
\[
\left[1-\overline{u}^{-\frac{4k}{n-2}}\left(1-(\frac{2}{n-2})^{2}(\frac{\overline{u}_{t}}{\overline{u}})^{2}\right)^{k}\right]\overline{u}^{\frac{2n}{n-2}}
\]
 is a nonpositive constant function.

We denote $\bar{u}_{h}$ as the radial solution $\bar{u}$ satisfying
\begin{equation}
h=\left[1-\overline{u}^{-\frac{4k}{n-2}}\left(1-(\frac{2}{n-2})^{2}(\frac{\overline{u}_{t}}{\overline{u}})^{2}\right)^{k}\right]\overline{u}^{\frac{2n}{n-2}}.\label{eq:u_h definition}
\end{equation}
The metric $g=\overline{u}(t)^{\frac{4}{n-2}}(dt^{2}+d\theta^{2})$
is the standard sphere metric when $h=0.$

To study the local behavior of the solution near singularities, the
following H\"{o}lder regularity is important.
\begin{thm}
\label{Holder regularity} \cite{HLT-1} Suppose $g=u^{\frac{4}{n-2}}(dt^{2}+d\theta^{2})$
and \textup{$\sigma_{k}(g^{-1}A_{g})=c$} on $\{t\ge t_0\}\times \mathbb{S}^{n-1}$
with $\lambda(A_{g})\in\Gamma_{k}^{+}$, where $c$ is a positive
constant and $2\le k\le n$. Then there exist positive constants $\alpha$
and $C$ such that for $t>t_0+1$, we have
\[
|u(t,\theta)-\bar{u}(t)|\le Ce^{-\alpha t}\bar{u}(t),
\]
where $\bar{u}$ is a radial smooth solution to (\ref{eq:ode in cylinder})
on $\mathbb{R}\times \mathbb{S}^{n-1}$ in the $\Gamma_{k}^{+}$ class.
\end{thm}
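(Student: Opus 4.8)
The plan is to argue in four stages: (i) a Harnack inequality on the cross-sections $\{t\}\times\mathbb{S}^{n-1}$; (ii) qualitative convergence of $u$, along any sequence $t_i\to\infty$, to the family of radial solutions, via a blow-up argument and a Liouville-type theorem; (iii) selection of a single limiting radial solution $\bar u$ by means of a conserved flux coming from Theorem~\ref{thm:k-z}; and (iv) upgrading this to the claimed exponential rate by linearizing at $\bar u$ and analyzing the spectrum on the cylinder. I expect stage (iv) to be the main obstacle.

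For stage (i): since $\lambda(A_g)\in\Gamma_k^+$ along the solution, the equation $\sigma_k(g^{-1}A_g)=c$ is a uniformly elliptic fully nonlinear equation $F(\nabla^2 u,\nabla u,u)=0$ with $F^{1/k}$ concave, so the Evans--Krylov and Krylov--Safonov estimates apply on unit sub-cylinders $[t-1,t+1]\times\mathbb{S}^{n-1}$. Combining these with the moving-plane method adapted to the $\sigma_k$-equation near an isolated singular point (a Kelvin transform in the underlying $\mathbb{R}^n\setminus\{0\}$ picture) I would obtain, for all $t\ge t_0+1$, the cross-sectional Harnack inequality $\sup_{\mathbb{S}^{n-1}}u(t,\cdot)\le C\inf_{\mathbb{S}^{n-1}}u(t,\cdot)$, and, together with the a priori bounds ruling out $u\to\infty$ in the $\Gamma_k^+$ class, uniform $C^m$ control of $u$ (suitably normalized on each slice) on compact subsets of the cylinder.

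For stage (ii): fix $t_i\to\infty$ and consider the translates $u(\,\cdot+t_i,\cdot)$, which again solve the equation since translation in $t$ is an isometry of the cylinder. By stage (i) a subsequence converges in $C^\infty_{\mathrm{loc}}(\mathbb{R}\times\mathbb{S}^{n-1})$ to a positive $u_\infty$ solving $\sigma_k(g_{u_\infty}^{-1}A_{g_{u_\infty}})=c$ in $\Gamma_k^+$ on the whole cylinder (the case $u(t,\theta_0)\to 0$, in which the singularity is removable, being handled directly by interior regularity at the puncture). Since $\mathbb{R}\times\mathbb{S}^{n-1}$ is conformal to $\mathbb{R}^n\setminus\{0\}$, the radial symmetry theorem of Li~\cite{L} forces $u_\infty$ to be radial, so by the classification through~\eqref{eq:ode in cylinder} one has $u_\infty(t,\theta)=\bar u_h(t+T)$ for some $h\le 0$, $T\in\mathbb{R}$. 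To see $h$ is the same for every sequence, I would apply Theorem~\ref{thm:k-z} on $N=[t_0,t]\times\mathbb{S}^{n-1}$ with the actual metric $g$ and $X=\partial_t$: the field $\partial_t$ is Killing, hence conformal Killing, for $dt^2+d\theta^2$ and therefore conformal Killing for $g$, and since $\sigma_k(g^{-1}A_g)\equiv c$ the left-hand side of the identity vanishes, so the flux $\mathcal P(t):=\int_{\{t\}\times\mathbb{S}^{n-1}}\mathring H_a^b\,\nu_b X^a\,d\sigma_g$ is independent of $t$. Evaluating $\mathcal P$ on the blow-up limit and comparing with its value on $\bar u_h$, a strictly monotone function of $h$ obtained from~\eqref{eq:u_h definition}, pins $h$ down; a shadowing argument then keeps the translation parameters bounded and produces a single radial solution $\bar u$ with $u=\bar u\,(1+v)$ on $\{t\ge t_1\}\times\mathbb{S}^{n-1}$, where $\varepsilon(t):=\|v(t,\cdot)\|_{C^2(\mathbb{S}^{n-1})}\to 0$ as $t\to\infty$.

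For stage (iv), the crux: $v$ satisfies $\mathcal L_{\bar u}v=Q(v)$ on $\{t\ge t_1\}\times\mathbb{S}^{n-1}$, where $\mathcal L_{\bar u}$ is the uniformly elliptic, $\bar u$-dependent linearization of the $\sigma_k$-operator and $|Q(v)|\le C\varepsilon(t)\,\|v\|_{C^2(\mathrm{loc})}$; interior Schauder estimates absorb the quadratic term for $t$ large, so $v$ is to leading order a bounded solution of $\mathcal L_{\bar u}v=0$. Expanding $v(t,\theta)=\sum_{j\ge0}v_j(t)Y_j(\theta)$ in eigenfunctions of $\Delta_{\mathbb{S}^{n-1}}$, each $v_j$ solves a second-order ODE with coefficients controlled by $\bar u$; for $j\ge1$ the eigenvalue $\mu_j\ge n-1>0$ forces real nonzero indicial roots and hence $|v_j(t)|\le Ce^{-\alpha t}$, while the only bounded solution of the $j=0$ equation is the infinitesimal translation $\partial_t\bar u/\bar u$, which vanishes once $\bar u$ is taken to be the optimal translate within its one-parameter family (and the conformal Killing fields fixing the singular point are used to exclude any residual non-decaying low modes). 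Summing the series with the spectral-gap weights gives $|v(t,\theta)|\le Ce^{-\alpha t}$, i.e.\ $|u(t,\theta)-\bar u(t)|\le Ce^{-\alpha t}\bar u(t)$ for $t>t_0+1$ after adjusting the constants. The genuinely delicate points are the low-mode analysis of $\mathcal L_{\bar u}$ and the continuity/bootstrap argument that converts $\varepsilon(t)\to0$ into a uniform exponential rate.
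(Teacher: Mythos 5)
First, note that the paper does not prove this statement at all: Theorem~\ref{Holder regularity} is quoted verbatim from Han--Li--Teixeira \cite{HLT-1}, so there is no internal proof to compare against. Your proposal is therefore an attempt to reconstruct a substantial research paper, and while the four-stage skeleton (local estimates and Harnack, translation compactness plus a Liouville/radial-symmetry theorem, selection of the Delaunay parameter $h$ by the conserved Pohozaev flux, and a linear analysis for the rate) is indeed the broad architecture of \cite{HLT-1} and of its $k=1$ antecedents \cite{CGS,KMPS}, the two steps you yourself flag as delicate are precisely where the proof lives, and as written they do not close.

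Concretely: (a) In stage (iii), knowing that every translated subsequence converges to some $\bar u_h(\cdot+T_i)$ with the \emph{same} $h$ but a priori different phases $T_i$ does not yield a single limit $\bar u$, let alone the relation $u=\bar u(1+v)$ with $v\to0$; the ``shadowing argument'' you invoke is not a standard black box here and is a major part of the work in \cite{KMPS,HLT-1}. (b) In stage (iv), the linearization $\mathcal L_{\bar u}$ at a Delaunay-type solution has \emph{periodic} (not constant) coefficients in $t$ when $h<0$, so ``indicial roots'' are not available; one needs a Floquet/exponential-dichotomy analysis, and the nondegeneracy statement you need --- that for $j\ge1$ all bounded solutions decay exponentially --- is a nontrivial theorem even for $k=1$ (Mazzeo--Pacard--Pollack) and is not known for free for the fully nonlinear linearization. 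Moreover your treatment of the $j=0$ mode is wrong as stated: that ODE has a two-dimensional solution space spanned by $\partial_t\bar u$ (bounded, periodic, \emph{non-decaying}) and $\partial_h\bar u$ (linearly growing), so a bounded solution need not vanish after ``choosing the optimal translate''; killing the $\partial_t\bar u$-component requires showing that the phase actually converges, which circles back to gap (a). Finally, in stage (i) uniform ellipticity and Evans--Krylov presuppose a priori $C^1$ and $C^2$ bounds, which for this equation come from the Guan--Wang/Chen-type local estimates (Theorem~\ref{thm:aprior estimates}) rather than from Krylov--Safonov, and the cross-sectional Harnack inequality near the singularity is itself one of the main theorems of \cite{HLT-1}, proved by the moving-sphere method, not a routine consequence. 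So the proposal is a correct road map with the hardest stretches left unpaved; for this paper's purposes the honest course is to cite \cite{HLT-1}, as the author does.
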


Chang-Han-Yang have classified the radial solution
$\bar{u}$ to (\ref{eq:ode in cylinder}) completely in Euclidean
space in \cite{CHY}. For $h<0$ and $k<\frac{n}{2}$, there exists a periodic function
$\xi(t)$ such that the metric $g=\frac{e^{2\xi(t)}}{|x|^{2}}g_{E}=\bar{u}_h(t)^{\frac{4}{n-2}}(dt^{2}+d\theta^{2})$
is complete in $\mathbb{R}^{n}\backslash\{0\}$ and $\bar{u}_h(t)$
is bounded with positive lower bound when $\sigma_{k}(g^{-1}A_{g})=c$.
Theorem \ref{Holder regularity} generalizes the classical result
for $k=1$ proved by Caffarelli-Gidas-Spruck \cite{CGS} and Korevaar-Mazzeo-Pacard-Schoen
\cite{KMPS}, and provides a powerful tool to deal with the singular
Yamabe problem.

Following the symbols in Pollack \cite{Pollack}, for a conformal
Killing field $X$ on $\mathbb{S}^{n}$ and $p\in P$, we denote 
\[
D_{k}(g,p)(X):=\int_{\Sigma}\mathring{H}_{a}^{b}\nu_{b}X^{a}d\sigma_{g}.
\]
Here $\Sigma$ is homologous to $\partial\overline{B_{\delta}(p)}$
and $\delta$ is sufficiently small such that $\overline{B_{\delta}(p)}$
contains no other singular points. For $k=1$, this number is called
the Dilational Pohozaev invariant. For general $k$, we also call
it $k$-Dilational Pohozaev invariant.

From the divergence structure of $<X,\nabla\sigma_{k}(g^{-1}A)>$,
we know that $D_{k}(g,p)(X)$ is independent of the hypersurface $\Sigma$,
which is homologous to $\partial \overline{B_{\delta}(p)}$. With Theorem \ref{thm:k-z}
and Theorem \ref{Holder regularity}, we can compute the local invariant
$D_{k}(g,p)(X)$ near the singularity $p$. By Theorem \ref{thm:k-z},
for $\mathbb{S}^{n}\backslash P$
\begin{equation}
\sum_{i=1}^{q}D_{k}(g,p_{i})(X)=0.\label{eq:equality}
\end{equation}

Let $X_{p}$ denote the conformal Killing field on $\mathbb{S}^{n}$ that fixes
the point $p$. Denote $D_{k}(g,p):=D_{k}(g,p)(X_{p}).$ 
\begin{lem}
\label{lem:k-invariant}Suppose $g=v^{\frac{4}{n-2}}g_{0}=u^{\frac{4}{n-2}}(dt^{2}+d\theta^{2})$
satisfying (\ref{eq:equation2}) around singular point $p$. Then there
exists a constant $h$ such that 
\[
D_{k}(g,p)=\binom{n-1}{k-1}\frac{n-k}{n}\frac{n}{2k}(\frac{1}{2})^{k-1}hw_{n-1},
\]
where $w_{n-1}$ is the volume of the unit $(n-1)$-sphere in $\mathbb{R}^{n}$
and $g$ is asymptotic to $g_{h}=\overline{u}_{h}^{\frac{4}{n-2}}(dt^{2}+d\theta^{2})$
in the sense that $|u(t,\theta)-\bar{u}_{h}(t)|\le Ce^{-\alpha t}$
for large $t$ as in Theorem \ref{Holder regularity}.
\end{lem}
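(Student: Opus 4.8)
The plan is to evaluate $D_k(g,p)$ on a family of cross-sections of the cylindrical model that shrink to $p$, and then to replace $g$ by the radial asymptotic model supplied by Theorem~\ref{Holder regularity}. First I would fix the conformal identification of a punctured neighbourhood of $p$ in $\mathbb{S}^n\setminus P$ with the half-cylinder $(t_0,\infty)\times\mathbb{S}^{n-1}$ so that $p$ corresponds to $t\to+\infty$ and the conformal Killing field $X_p$ fixing $p$ becomes the translation field $\partial_t$ (a constant multiple of it, with the normalization used throughout the paper). Since $\partial_t$ is conformal Killing for $dt^2+d\theta^2$, and conformal Killing is a conformally invariant notion, the divergence identity $\frac{n-k}{n}<X_p,\nabla\sigma_k(g^{-1}A)>=\nabla_b(X_p^a\mathring{H}_a^b)$ applies to $g=u^{4/(n-2)}(dt^2+d\theta^2)$; combined with $\sigma_k(g^{-1}A_g)=\mathrm{const}$, this shows that $\int_{\Sigma_t}\mathring{H}_a^b\nu_b X_p^a\,d\sigma_g$, with $\Sigma_t:=\{t\}\times\mathbb{S}^{n-1}$, is independent of $t$ and equals $D_k(g,p)$.

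Second, I would apply Theorem~\ref{Holder regularity} to obtain a radial solution $\bar u$ of (\ref{eq:ode in cylinder}) in the $\Gamma_k^+$ class with $|u(t,\theta)-\bar u(t)|\le Ce^{-\alpha t}\bar u(t)$, and I define $h$ to be the value of the constant quantity in (\ref{eq:u_h definition}) associated with this $\bar u$, so that $\bar u=\bar u_h$; this is the constant $h$ asserted in the statement. Since $\bar u_h$ is pinched between two positive constants and $u$ is pinched by it, equation (\ref{eq:equation2}) is uniformly elliptic with values in a fixed compact part of $\Gamma_k^+$ along the cylinder over unit $t$-intervals, so Evans--Krylov and interior Schauder estimates applied to $u-\bar u_h$ promote the $C^0$ bound to $\|u-\bar u_h\|_{C^2(\Sigma_t)}\le Ce^{-\alpha t}$. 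The integrand $\mathring{H}_a^b\nu_b X_p^a\,d\sigma_g$ on $\Sigma_t$ is a fixed smooth function of $u,u^{-1},\nabla u,\nabla^2 u$ and the flat cylinder metric, evaluated over a compact set, so
\[
\Bigl|\int_{\Sigma_t}\mathring{H}_a^b\nu_b X_p^a\,d\sigma_g-\int_{\Sigma_t}\mathring{H}_a^b\nu_b X_p^a\,d\sigma_{g_h}\Bigr|\le Ce^{-\alpha t}.
\]
The left integral is the $t$-independent number $D_k(g,p)$, while the right integral is also $t$-independent because the radial metric $g_h$ has constant $\sigma_k$ curvature and hence satisfies the same divergence identity; letting $t\to\infty$ yields $D_k(g,p)=\int_{\Sigma_t}\mathring{H}_a^b\nu_b X_p^a\,d\sigma_{g_h}$.

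Third, I would compute this last integral directly on $g_h=\bar u_h^{4/(n-2)}(dt^2+d\theta^2)$. In the $g_h$-orthonormal frame adapted to the $dt^2+d\theta^2$ product structure, $A_{g_h}$, the Newton tensor $T_{k-1}$, and $H_a^b$ are simultaneously diagonal, with one eigenvalue in the $\partial_t$-direction and one of multiplicity $n-1$ in the $\mathbb{S}^{n-1}$-directions, all expressible through $\bar u_h,\bar u_h',\bar u_h''$; contracting with $\nu=\partial_t/|\partial_t|_{g_h}$ and $X_p=\partial_t$ and integrating over $\mathbb{S}^{n-1}$ leaves $\mathring{H}_t^t\,\bar u_h^{2n/(n-2)}w_{n-1}$. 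Using the ODE (\ref{eq:ode in cylinder}) to eliminate $\bar u_h''$ and the definition (\ref{eq:u_h definition}) of $h$, this collapses to $\binom{n-1}{k-1}\frac{n-k}{n}\frac{n}{2k}(\frac{1}{2})^{k-1}hw_{n-1}$; I would relegate this bookkeeping to the appendix.

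I expect the main obstacle to be the second step: upgrading the purely $C^0$ asymptotics of Theorem~\ref{Holder regularity} to $C^2$ control, so that the whole integrand — not merely $u$ — converges to its radial model. This hinges on the fully nonlinear equation being uniformly elliptic along the cylinder because the solutions remain in a fixed compact part of $\Gamma_k^+$, which lets the standard interior regularity theory absorb the exponential decay. The explicit radial computation in the third step is lengthy but mechanical once the diagonal structure and the two identities (\ref{eq:ode in cylinder}), (\ref{eq:u_h definition}) are in hand.
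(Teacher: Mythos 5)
Your proposal is correct and follows essentially the same route as the paper's appendix proof: reduce to the half-cylinder with $X_p=\partial_t$, use the divergence structure to make the flux $t$-independent, replace $u$ by the radial model $\bar u_h$ from Theorem \ref{Holder regularity} up to $O(e^{-\alpha t})$ errors, and evaluate $\mathring{H}^1_1$ via the ODE (\ref{eq:ode in cylinder}) and the definition (\ref{eq:u_h definition}) of $h$. The only difference is presentational — you compute the flux of the exact radial metric and compare, while the paper carries the $O(e^{-\alpha t})$ errors through the computation of $\mathring{H}^1_1$ for $g$ itself — and you are right that the $C^0\to C^2$ upgrade of the asymptotics is the step the paper leaves implicit.
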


Without confusion, we  always have the following standard relations. Consider the stereographic projection for $\mathbb{S}^{n}\backslash \{p\}$ from $\{p\}$ and $r_0<\pi$. Let $t_0=\ln \left(\tan(\frac{r_0}{2})\right)^{-1}$. 
The deleted geodesic ball $B_{r_0}(p)\backslash \{p\}$ is mapped to $$\mathbb{R}^{n}\backslash B_{\left(\tan(\frac{r_0}{2})\right)^{-1}}(0),$$ and corresponds to the half cylinder 
\[
C_{t_{0}}^{n}:=\{(t,\theta):\theta\in S^{n-1},t\ge t_{0}\}.
\] 
In the appendix, we will prove Lemma \ref{lem:k-invariant}.

From Han-Li-Teixeira \cite{HLT-1} and Chang-Han-Yang \cite{CHY},
we know the metric is conic for $k=n/2$. Here we say a metric $g$ conic if around
any singularity $p$, there exists a H\"older continuous function
$w(x)$ and a constant $\beta$ such that $g=e^{2u}g_{E}$ and $u(x)=\beta\log|x-p|+w(x)$.
For recent works about conic metrics for $\sigma_{2}$ Yamabe problem,
we refer to \cite{FW,FW2,FW3}. Especially for $k=2,n=4$, we assume
that at the origin, for $-1<\beta_{1}<0$, the radial solution to $(\ref{eq:euclidean equation})$ is $v(x)=|x|^{\beta_{1}}v_{1}(x)$
for a H\"older continuous radial function $v_{1}(x)$ and $g=v^{2}g_{E}$.
Since $\bar{u}_{h}(t)=|x|v(x)\rightarrow0$ as $|x|\rightarrow 0$,
from (\ref{eq:u_h definition}), we have $h=-(1-(\frac{\overline{u}_{h}'}{\overline{u}_{h}})^{2})^{2}=-\beta_{1}^{2}(2+\beta_{1})^{2}$.
This term also appears in Fang-Wei's subcritical condition in \cite{FW}.
From (\ref{eq:equality}) we know that the metric with constant $\sigma_{k}$
curvature can not admit one single singularity on $\mathbb{S}^{n}.$

\section{Growth estimate of singular solution}

In this section, we will describe the behavior of the solution near
the singularity by the contradiction argument. For reader's convenience,
we give the classical $C^{1}$ and $C^{2}$ estimates for $\sigma_{k}$
Yamabe equation as follows, which can be found in Theorem 1.20 in
\cite{LL1} and other papers such as \cite{GW1,SC,W1,L1}.
\begin{thm}
\label{thm:aprior estimates}Let $(M^{n},g)$ be a smooth, complete
Riemannian manifold for $n\geq3.$ Assume that $(M,g)$ has a positive
injectivity radius $i_{0}$ and $R_{ijkl}$, $|\nabla_{g}R_{ijkl}|,|\nabla_{g}^{2}R_{ijkl}|$
are bounded. On a geodesic ball $B_{3r}$ in $M$ of radius $3r\leq\frac{1}{2}i_{0},$
\[
\sigma_{k}\big(\lambda(A_{u^{\frac{4}{n-2}}g})\big)=h
\]
for any positive $h\in C^{2}(B_{3r})$ and $\lambda(A_{u^{\frac{4}{n-2}}g})\in\Gamma_{k}^{+}$.
Then for any positive $C^{4}$ solution $u$ on $B_{3r},$ we have
on $B_{r}$ 
\[
\big|\nabla_{g}(\log u)\big|_{g}(x)\leq C+C\left(\sup_{B_{2}}u^{\frac{2}{n-2}}\right)\sqrt{1+\sup_{B_{2r}}|\nabla h|},
\]
 and 
\[
\big|\nabla_{g}^{2}(\log u)\big|_{g}(x)\leq C+C\left(\sup_{B_{2r}}u^{\frac{4}{n-2}}\right)\left(1+\sup_{B_{2r}}|\nabla h|\right)+C\sup_{B_{2r}}\left(|\nabla h|^{2}+\left|\nabla^{2}h\right|\right),
\]
 where $C$ is some positive constant depending only on the upper
bound of $r,i_{0}$, $\sup_{B_{3r}}h$ and the bound of $R_{ijkl}$,
$|\nabla_{g}R_{ijkl}|,|\nabla_{g}^{2}R_{ijkl}|$.
\end{thm}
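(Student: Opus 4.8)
The plan is to establish these local $C^{1}$ and $C^{2}$ bounds by the classical auxiliary‑function and maximum‑principle method for fully nonlinear conformal equations, in the spirit of Guan--Wang \cite{GW1}, Li--Li \cite{LL1} and \cite{W1,L1,SC}. First I would rewrite the equation conformally. With $w:=\tfrac{2}{n-2}\log u$, so that $u^{4/(n-2)}g=e^{2w}g$, the Schouten tensor transforms as
\[
A_{e^{2w}g}=A_g-\nabla_g^{2}w+dw\otimes dw-\tfrac12|\nabla_g w|_g^{2}\,g=:W ,
\]
and, since $\lambda(A_{e^{2w}g})=e^{-2w}\lambda(g^{-1}W)$, the equation $\sigma_{k}(\lambda(A_{u^{4/(n-2)}g}))=h$ becomes
\[
F\big(g^{-1}W\big)=\big(h\,e^{2kw}\big)^{1/k}=h^{1/k}e^{2w}=:\psi ,\qquad F:=\sigma_{k}^{1/k}.
\]
Because $\Gamma_{k}^{+}$ is a cone, $\lambda(g^{-1}W)\in\Gamma_{k}^{+}$; on this cone $F$ is elliptic, concave, and its Newton tensor $T^{ij}:=\partial F/\partial W_{ij}$ is positive definite. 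Fix geodesic normal coordinates on $B_{3r}$ and a cutoff $\eta$ with $\eta\equiv 1$ on $B_{r}$, $\mathrm{supp}\,\eta\subset B_{2r}$ and $|\nabla\eta|^{2}+|\nabla^{2}\eta|\le C\eta/r^{2}$; all constants below may depend on $r$, $i_{0}$, $\sup h$ and the bounds on $R_{ijkl},\nabla_g R_{ijkl},\nabla_g^{2}R_{ijkl}$.

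Step~1 (gradient bound). I would apply the maximum principle to $P:=\eta\,|\nabla_g w|_g^{2}$ (possibly times a suitable exponential weight in $w$). At an interior maximum $x_{0}$, where $|\nabla w|$ may be assumed large, $\nabla P(x_{0})=0$ and $T^{ij}\nabla_{i}\nabla_{j}P(x_{0})\le 0$. Differentiating the equation once, substituting $\nabla_{i}\nabla_{j}w$ from the identity above, commuting covariant derivatives (here $R_{ijkl}$ and $\nabla_g R$ enter) and using that $\nabla\psi$ contains $\nabla h$ and $\psi\,\nabla w$, the standard manipulation forces at $x_{0}$ an inequality $\eta|\nabla_g w|_g^{2}\le C+C\big(\sup_{B_{2r}}u^{4/(n-2)}\big)\big(1+\sup_{B_{2r}}|\nabla h|\big)$. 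Taking square roots and recalling $\nabla\log u=\tfrac{n-2}{2}\nabla w$ gives the first asserted estimate on $B_{r}$.

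Step~2 (Hessian bound). Next I would apply the maximum principle to a quantity controlling the largest eigenvalue of $\nabla_g^{2}w$, for instance $Q:=\eta\,W(e,e)\,e^{\Phi(|\nabla_g w|_g^{2})}$ maximized over unit vectors $e$ and points of $B_{2r}$, with $\Phi$ increasing (the maximization over $e$ circumvents the non‑smoothness of $\lambda_{\max}$). Differentiating the equation twice in the maximizing direction $e$, the decisive point is that \emph{concavity} of $F=\sigma_{k}^{1/k}$ makes the third‑order term $F^{ij,pq}\nabla_e W_{ij}\nabla_e W_{pq}$ nonpositive, which is exactly what lets the remaining third‑order terms---controlled via $\nabla Q(x_{0})=0$ and the gradient bound of Step~1---be absorbed. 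Commuting derivatives now brings in $R_{ijkl},\nabla_g R,\nabla_g^{2}R$, while $\nabla^{2}\psi$ brings in $|\nabla h|^{2}$ and $|\nabla^{2}h|$; collecting everything yields $\eta\,\lambda_{\max}(\nabla_g^{2}w)\le C\big(1+\sup_{B_{2r}}u^{4/(n-2)}\big)\big(1+\sup_{B_{2r}}|\nabla h|\big)+C\sup_{B_{2r}}\big(|\nabla h|^{2}+|\nabla^{2}h|\big)$. Combined with the upper bound on $\Delta_g w$ coming from $\sigma_{1}(g^{-1}W)>0$ (and the bounds on $A_g$ and on $\nabla_g w$ from Step~1), this gives a two‑sided bound on $\nabla_g^{2}w$, hence on $\nabla_g^{2}\log u$, which is the second estimate on $B_{r}$.

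The hard part will be Step~2: closing up the third‑order terms. One must use the full strength of the concavity inequality for $\sigma_{k}^{1/k}$ on $\Gamma_{k}^{+}$ and handle with care the coupling between $\nabla\Phi$, $\nabla\eta$ and $\nabla_g^{2}w$, as well as the degenerate case of coinciding eigenvalues of $\nabla_g^{2}w$. The curvature bookkeeping on a general $(M,g)$ rather than on $\mathbb{R}^{n}$ is routine but is precisely what forces the hypotheses on $R_{ijkl},\nabla_g R_{ijkl},\nabla_g^{2}R_{ijkl}$ and on the injectivity radius. Since this a priori estimate is by now classical, in the paper we merely invoke it from \cite{GW1,LL1,W1,L1,SC}; the plan above is the route by which one proves it.
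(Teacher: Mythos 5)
The paper does not prove this theorem: it is quoted verbatim from the literature (Theorem~1.20 of Li--Li \cite{LL1}, together with \cite{GW1,SC,W1,L1}), so there is no in-paper argument to compare against. Your sketch follows exactly the standard route of those references --- conformal rewriting with $w=\tfrac{2}{n-2}\log u$, ellipticity and concavity of $\sigma_k^{1/k}$ on $\Gamma_k^{+}$, and a cutoff/maximum-principle argument first for $\eta|\nabla_g w|^2$ and then for a second-order test quantity --- and the shape of the resulting bounds (the factors $\sup u^{2/(n-2)}$, $\sup u^{4/(n-2)}$ and the $|\nabla h|$, $|\nabla^2 h|$ terms coming from differentiating $\psi=h^{1/k}e^{2w}$) matches the statement. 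As an outline it is the right proof; since the paper itself only cites the result, leaving the third-order-term analysis of Step~2 unexpanded is acceptable.

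One point in your closing of Step~2 is logically off as written. An upper bound on $\lambda_{\max}(\nabla_g^2 w)$ \emph{together with an upper bound on} $\Delta_g w$ does not give a two-sided Hessian bound (consider $\nabla^2 w=\mathrm{diag}(1,-M)$ with $M\to\infty$: both quantities stay bounded above). The correct way to close is to run the maximum principle on the $\Gamma_1^{+}$ tensor itself: your test function $\eta\,W(e,e)e^{\Phi}$ naturally yields an upper bound on $\lambda_{\max}(g^{-1}W)$, and then $\sigma_1(g^{-1}W)>0$ forces $\lambda_{\min}(g^{-1}W)\ge -(n-1)\lambda_{\max}(g^{-1}W)$, giving a two-sided bound on $W$ and hence, via $\nabla_g^2 w=A_g-W+dw\otimes dw-\tfrac12|dw|^2g$ and the Step~1 gradient bound, on $\nabla_g^2 w$. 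Phrase the conclusion of Step~2 as a bound on $\lambda_{\max}(W)$ rather than on $\lambda_{\max}(\nabla_g^2 w)$ and the argument closes cleanly.
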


The following behavior near singularities in Theorem \ref{thm:Uniform decay}
is well known in the classical Yamabe problem and we refer to Schoen
and Pollack \cite{Pollack}. The local estimate for $\sigma_{k}$
Yamabe problem in $B_{r}\backslash\{0\}$ has also been  proved by
Li \cite{L}. 

\begin{thm}
\label{thm:Uniform decay}Suppose $g=u^{\frac{4}{n-2}}g_{0}$ with
smooth metric $g_{0}$ and $\sigma_{k}(g^{-1}A_{g})=C_{0}$, where
$C_{0}$ is a positive constant on $\mathbb{S}^{n}\backslash P$ with $\lambda(A_{g})\in\Gamma_{k}^{+}$.
Then 
\[
u(x)\le Cd(x,P)^{\frac{2-n}{2}},
\]
where $C$ is independent of $u$ and $d(\cdot,\cdot)$ denotes the
distance function on $(\mathbb{S}^{n},g_{0})$.
\end{thm}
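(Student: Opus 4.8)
The plan is to prove the bound by a blow-up argument near each point of $P$, with the moving plane method as the crucial extra ingredient; this is the scheme of Schoen and Pollack \cite{Pollack} for $k=1$ and of Li \cite{L} and Han--Li--Teixeira \cite{HLT-1} for $k\ge 2$. Since $P$ is finite and $d(\cdot,P)^{\frac{2-n}{2}}$ is bounded below by a positive constant away from $P$, it is enough to establish, for each $p\in P$, a radius $\delta>0$ with $\overline{B_\delta(p)}\cap P=\{p\}$ and a constant $C$ \emph{independent of $u$} such that $u(x)\le C\,d(x,p)^{\frac{2-n}{2}}$ on $B_\delta(p)\setminus\{p\}$, together with the analogous uniform bound on compact subsets of $\mathbb{S}^n\setminus P$. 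Fix $p$ and use geodesic normal coordinates centred at $p$, so that $g_0=g_E+O(|x|^2)$ and $d(x,p)$ is comparable to $|x|$; on the half-cylinder $C^n_{t_0}$ of Section 2 the claim becomes a uniform upper bound for the cylindrical conformal factor for large $t$.

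Suppose the local estimate fails uniformly: there are solutions $u_j$ as in the statement and points $x_j$ with $d_j:=d(x_j,p)\to 0$ and $L_j:=d_j^{\frac{n-2}{2}}u_j(x_j)\to\infty$. Rescale at the intrinsic scale, $v_j(y):=d_j^{\frac{n-2}{2}}u_j(d_j y)$ for $0<|y|<\delta/d_j$. By conformal invariance $v_j$ solves the $\sigma_k$-Yamabe equation \eqref{eq:equation2} with a background metric converging to $g_E$ in $C^\infty_{\mathrm{loc}}(\mathbb{R}^n\setminus\{0\})$, with $\lambda(A)\in\Gamma_k^+$ and right-hand side the fixed constant $\binom{n}{k}2^{-k}$, and $v_j(\omega_j)=L_j\to\infty$ where $\omega_j:=x_j/d_j$ lies on the unit sphere. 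The $C^1$ and $C^2$ estimates of Theorem \ref{thm:aprior estimates} apply (the cone condition holds and the right-hand side is a fixed constant), and together with the Harnack inequality available for $\Gamma_k^+$ solutions they force $v_j\to\infty$ uniformly on $\{3/4\le|y|\le 5/4\}$; blowing up once more at a maximum point $y_j$ of $v_j$ on that annulus, at the scale $v_j(y_j)^{-2/(n-2)}\to 0$ --- under which the puncture $\{0\}$ recedes to spatial infinity, since $|y_j|\,v_j(y_j)^{2/(n-2)}\to\infty$ --- and using the Evans--Krylov and Schauder theory (the equation is uniformly elliptic once a $C^2$ bound is in force), a subsequence converges in $C^\infty_{\mathrm{loc}}$ to a positive entire solution of \eqref{eq:euclidean equation} on $\mathbb{R}^n$, which by the Liouville theorem for the $\sigma_k$-Yamabe equation is a standard bubble. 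Thus the failure of the estimate forces a bubble to concentrate at the scale $d_j$ about $p$.

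Ruling this out is the heart of the argument and, I expect, the main obstacle. Following Li \cite{L} (and Li--Li for the moving plane technique for $\sigma_k$ equations), one applies the moving plane method --- to $u_j$ about $p$, or equivalently to the rescaled $v_j$ near the puncture --- and shows that the solutions are asymptotically radial about the singularity, uniformly along the sequence; in particular $\sup_{\partial B_r(p)}u_j\le C\inf_{\partial B_r(p)}u_j$ with $C$ independent of $j$ and of $r\in(0,\delta)$. This precludes lateral concentration, so the subsequential limit extracted above cannot be a bubble on all of $\mathbb{R}^n$ but must be a \emph{radial} solution of \eqref{eq:euclidean equation} on $\mathbb{R}^n\setminus\{0\}$; by the classification of Chang--Han--Yang \cite{CHY} and the asymptotics of Theorem \ref{Holder regularity}, any such solution equals one of the functions $\bar u_h$ with $h\le 0$, and $\bar u_h$ is bounded --- contradicting $v_j(\omega_j)=L_j\to\infty$. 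Hence $L_j$ cannot diverge. The corresponding bound on compact subsets of $\mathbb{S}^n\setminus P$ follows from the same scheme (cf. \cite{Pollack}), and Theorem \ref{thm:Uniform decay} follows.
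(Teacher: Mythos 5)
Your blow-up construction of a limiting bubble is essentially sound in outline and parallels the paper, though the paper does it in one step: it rescales at a maximum point of the weighted function $(\sigma-\rho(x))^{\frac{n-2}{2}}u(x)$, which automatically gives boundedness of the rescaled solutions on expanding balls and handles concentration near $P$ and in the interior simultaneously. (Your intermediate claim that ``the Harnack inequality available for $\Gamma_k^+$ solutions'' forces $v_j\to\infty$ uniformly on the annulus $\{3/4\le|y|\le 5/4\}$ is not justified by Theorem \ref{thm:aprior estimates}, whose bound on $|\nabla\log u|$ degenerates like $\sup u^{2/(n-2)}$; but this claim is not actually needed once you perform the second rescaling.) The real divergence from the paper, and the real problem, is in how you rule the bubble out.

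You assert that the moving plane method yields $\sup_{\partial B_r(p)}u_j\le C\inf_{\partial B_r(p)}u_j$ with $C$ independent of $j$ and of $r$, i.e.\ asymptotic radial symmetry about the puncture \emph{uniformly along the sequence}. That is not an available input: Li's Theorem $1.1'$ in \cite{L} and the asymptotics of Han--Li--Teixeira (Theorem \ref{Holder regularity}) give such statements with constants depending on the individual solution, and the paper explicitly flags that the whole point of Theorem \ref{thm:Uniform decay} is to remove that dependence. A uniform spherical Harnack inequality near $p$ is essentially equivalent to the estimate you are proving, so your argument is circular at its crucial step. It also cannot dispose of concentration at points of a compact subset of $\mathbb{S}^n\setminus P$ (there is no puncture there to be symmetric about), yet you defer that case to ``the same scheme.'' The paper's mechanism avoids uniformity altogether: Theorem \ref{thm:Sch} states that for each \emph{fixed} complete metric $g$ in the class, every geodesic ball with $\bar B\subset\Omega$ has geodesically convex boundary with respect to $g$ --- proved by moving a plane in from infinity toward the hyperplane corresponding to $\partial B$ after stereographic projection, with the singular set confined to one side. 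Since the blow-up limit is the round sphere metric by Li--Li's Liouville theorem \cite{LL2}, small $g_0$-balls around the concentration point acquire, in the limit, boundaries that violate this convexity, exactly as in Schoen's and Pollack's $k=1$ argument. You need this (or some equally non-circular obstruction, e.g.\ a Pohozaev-type identity) in place of the uniform symmetry claim.
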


In Theorem $1.1'$\cite{L}, Li proved this type of  estimate in local case
and the difference of our proof  from his  is that the constant $C$ here is
independent of $u$. Also Li-Nguyen have given a similar decay theorem
for a sequence of smooth solutions at the beginning of Part 3 for $\sigma_{k}$
Yamabe problem in \cite{LN}. See also \cite{LNW,FW2}.
\begin{proof}
Choose $x_{0}\in \mathbb{S}^{n}\backslash P$ and $\sigma$ sufficiently small
so that $\overline{B_{\sigma}(x_{0})}\subset \mathbb{S}^{n}\backslash P.$
Let $\rho(x)=d(x,x_{0})$ and define 
\[
f(x)=(\sigma-\rho(x))^{\frac{n-2}{2}}u(x).
\]

Since $u$ is smooth up to $\partial B_{\sigma}(x_{0})$, we have
$f=0$ on $\partial B_{\sigma}(x_{0})$. If $f(x)\le c$ for $x\in B_{\sigma}(x_{0})$,
choosing $\sigma=d(x_{0},P)/2$, the following holds 
\[
f(x_{0})=\sigma^{\frac{n-2}{2}}u(x_{0}),
\]
 and thus 
\[
u(x_{0})\le c2^{\frac{n-2}{2}}\cdot d(x_{0},P)^{\frac{2-n}{2}}.
\]
The theorem is proved. Otherwise, we have a sequence of functions $\{g_{i}=u_{i}^{\frac{4}{n-2}}g_{0}\}$
such that  for any $i$, there exists $x_i$ such that 

\[
(\sigma-d(x_{i},x_{0}))^{\frac{n-2}{2}}u_{i}(x_{i})=f(x_{i})=\max_{B_{\sigma}(x_{0})}f>i.
\]

Since $(\sigma-d(x_{i},x_{0}))^{\frac{n-2}{2}}\le\sigma^{\frac{n-2}{2}},$
we have $u_{i}(x_{i})\rightarrow\infty$ as $i\rightarrow\infty$.
Let $(y^{1},\cdots,y^{n})$ be a normal coordinate centered at $x_{i}$.
Denoting $\lambda_{i}:=u_{i}(x_{i})^{\frac{2}{n-2}},$ we get that
$\lambda_{i}\rightarrow\infty$ as $i\rightarrow\infty.$ Let $z=\lambda_{i}y$
and define
\[
\tilde{u}_{i}(z)=\lambda_{i}^{\frac{2-n}{2}}u_{i}(exp_{x_{i}}\frac{z}{\lambda_{i}}),
\]
where $\tilde{u}_{i}(0)=1$ and $exp$ is the exponential map. 

Let $g_{i}=\tilde{u}_{i}^{4/(n-2)}g_{0,i}$, where $g_{0,i}(z)=\sum_{k,l=1}^{n}g_{0,kl}(\frac{z}{\lambda_{i}})dz^{k}dz^{l}$
converges uniformly to the Euclidean metric and $g_{0}$ is centered
at $x_{i}$.

Denote $r_{i}:=\sigma-\rho(x_{i}).$ We have $u_{i}(x)\le2^{\frac{n-2}{2}}u_{i}(x_{i})$
in $B_{r_{i}/2}(x_{i}).$ So 
\[
\tilde{u}_{i}(z)\le2^{\frac{n-2}{2}}\quad\text{in}\quad|z|\le\lambda_{i}\frac{r_{i}}{2}=\frac{1}{2}(u_{i}(x_{i})(\sigma-\rho(x_{i}))^{\frac{n-2}{2}})^{\frac{2}{n-2}}\ge\frac{1}{2}i^{\frac{2}{n-2}}.
\]

Thus $\tilde{u}_{i}(z)$ is uniformly bounded on any compact set in
$\mathbb{R}^{n}$ and $\tilde{u}_{i}(0)=1$. Moreover, we know $\sigma_{k}(\lambda(A_{g_{i}}))=C_{0}$
in $B_{\lambda_{i}\frac{r_{i}}{2}}(0).$

By Theorem \ref{thm:aprior estimates} we know that for sufficiently
large $i$, there exists a constant $C>0$ independent of $i$ such
that on any compact set $K\subset\mathbb{R}^{n},$

\[
\sup_{K}|\nabla\ln\tilde{u}_{i}|+|\nabla^{2}\ln\tilde{u}_{i}|\le C.
\]

With $\tilde{u}_{i}(0)=1$, we have $\tilde{u}_{i}\ge C$ in $K$.
By the Schauder theory, we know 
\[
\sup_{K}|\ln\tilde{u}_{i}|_{C^{2,\alpha}}\le C.
\]

Then by Arzela-Ascoli theorem, there exists a subsequence of $\{\widetilde{u}_{i}\}$
(still denoted by $\widetilde{u}_{i}$) and a function $u_{\infty}$
such that $\tilde{u}_{i}\rightarrow u_{\infty}$ in $C^{2,\alpha}$
sense on any compact sets in $\mathbb{R}^{n}$. Therefore 
\[
\sigma_{k}(\lambda(A_{u_{\infty}^{\frac{4}{n-2}}g_{E}}))=C_{0}\quad{\rm in}\quad\mathbb{R}^{n}.
\]
Here $g_{\infty}=u_{\infty}^{\frac{4}{n-2}}g_{E}$ is the standard sphere metric,
which is proved by Li-Li\cite{LL2}. 

From the following theorem, we know that with respect to $g_{i}$, any ball
in $\mathbb{S}^{n}\backslash P$ has a concave boundary, which is contradicted
to the Liouville theorem proved by Li-Li \cite{LL2}.

Actually the $C$ in Theorem \ref{thm:Uniform decay} is also independent
of $d(x,P)$, which can be proved by replacing $x_{0}$ by $x_{0,i}$
in the above argument. 
\end{proof}
\begin{thm}
\label{thm:Sch}If $g$ is a complete metric of constant positive
$\sigma_{k}$ curvature on $\Omega\subset \mathbb{S}^{n}$( $\Omega\neq \mathbb{S}^{n}$),
which is conformal to $g_{0}$, then any ball $B$ (with respect to
$g_{0}$) with $\bar{B}\subset\Omega$ has geodesically convex boundary
$\partial B$ with respect to $g.$ (Here $g_{0}$ is the standard
sphere metric.)
\end{thm}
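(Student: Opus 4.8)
The plan is to reduce, by a conformal change of $\mathbb{S}^{n}$, to an inequality for the conformal factor in a Euclidean model, and then to prove that inequality by the method of moving spheres for the fully nonlinear $\sigma_{k}$ equation; this is the $\sigma_{k}$ analogue of Schoen's argument in the scalar case (cf. \cite{Pollack}), using the moving-plane machinery for conformally invariant fully nonlinear operators (cf. \cite{LL2}). Since $\Omega\neq\mathbb{S}^{n}$, fix $p_{0}\in\mathbb{S}^{n}\setminus\Omega$. A conformal diffeomorphism of $(\mathbb{S}^{n},g_{0})$ sends geodesic balls to geodesic balls and pulls $g$ back to an isometric metric, so the assertion is unchanged under stereographic projection from $p_{0}$ followed by a Euclidean similarity; hence I may assume that $\Omega$ corresponds to an open set $\hat\Omega$ with $\overline{B_{1}(0)}\subset\hat\Omega$, that $B$ corresponds to $B_{1}(0)$, that $g$ corresponds to $u^{\frac{4}{n-2}}g_{E}$ with $\sigma_{k}(\lambda(A_{g}))=\binom{n}{k}(\frac12)^{k}$ and $\lambda(A_{g})\in\Gamma_{k}^{+}$, and that $(\hat\Omega,g)$ is complete. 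Since $p_{0}\notin\Omega$, $\hat\Omega$ is a proper subset of $\mathbb{R}^{n}\cup\{\infty\}$; completeness forces $u$ to blow up near every finite point of $\partial\hat\Omega$, and if $\infty\in\partial\hat\Omega$ then $\infty$ is a singular point near which Theorem \ref{Holder regularity} and \cite{CHY} give $u\asymp|x|^{\frac{2-n}{2}}$. Geodesic spheres are totally umbilic and umbilicity is conformally invariant, so $\partial B_{1}(0)$ is umbilic for $g$; by the conformal transformation law for the second fundamental form, convexity of $\partial B_{1}(0)$ (with respect to the outward normal) is equivalent to $\partial_{r}\log u\ge-\frac{n-2}{2}$ on $\{|x|=1\}$.

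Let $u^{*}(x)=|x|^{2-n}u(x/|x|^{2})$ be the Kelvin transform about $\partial B_{1}(0)$. By conformal invariance of the $\sigma_{k}$ equation, $u^{*}$ is a positive admissible solution on a neighbourhood of $\{|x|\ge1\}$ (using $\overline{B_{1}}\subset\hat\Omega$), and one checks directly that $u^{*}=u$ and $\partial_{r}\log u^{*}=(2-n)-\partial_{r}\log u$ on $\{|x|=1\}$. Hence the convexity inequality is exactly $\partial_{r}(u-u^{*})\ge0$ at $r=1$, which, since $u=u^{*}$ on $\{|x|=1\}$, follows as soon as $u\ge u^{*}$ on $\hat\Omega\setminus\overline{B_{1}(0)}$.

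It remains to prove $u\ge u^{*}$ on $\hat\Omega\setminus\overline{B_{1}(0)}$, and this I would do by moving spheres. For $\lambda>0$ with $\overline{B_{\lambda}}\subset\hat\Omega$, let $u_{\lambda}(x)=(\lambda/|x|)^{n-2}u(\lambda^{2}x/|x|^{2})$ be the Kelvin transform about $\partial B_{\lambda}(0)$, an admissible solution on $\mathbb{R}^{n}\setminus\overline{B_{\lambda}}$ equal to $u$ on $\partial B_{\lambda}$. On $\partial(\hat\Omega\setminus\overline{B_{\lambda}})$ one has $u\ge u_{\lambda}$: equality on $\partial B_{\lambda}$; $u\to\infty$ while $u_{\lambda}$ stays bounded near finite points of $\partial\hat\Omega$; and, when $\infty\in\partial\hat\Omega$, $u\asymp|x|^{\frac{2-n}{2}}$ decays strictly more slowly than $u_{\lambda}\asymp|x|^{2-n}$ near $\infty$. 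Since $u_{\lambda}$ is uniformly small away from the origin for small $\lambda$ while $u$ is bounded below on the relevant compact pieces of $\hat\Omega$, a short argument gives $u\ge u_{\lambda}$ on $\hat\Omega\setminus\overline{B_{\lambda}}$ for all small $\lambda>0$. Let $\lambda^{*}$ be the supremum of those $\bar\lambda$ for which $u\ge u_{\lambda}$ holds for every admissible $\lambda\le\bar\lambda$, and let $\lambda_{0}=\sup\{\lambda:\overline{B_{\lambda}}\subset\hat\Omega\}\in(1,\infty]$. If $\lambda^{*}<\lambda_{0}$, then (by a standard limiting argument, using that strict inequality persists near $\partial\hat\Omega$) $w:=u-u_{\lambda^{*}}\ge0$ on $\hat\Omega\setminus\overline{B_{\lambda^{*}}}$ vanishes either at an interior point or together with its normal derivative at a point of $\partial B_{\lambda^{*}}$. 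Writing the difference of the two equations as a linear equation $Lw-c(x)w=0$ with $c\ge0$ — which makes sense because $\Gamma_{k}^{+}$ is a convex cone and $\sigma_{k}^{1/k}$ is concave and elliptic on it, and which is locally uniformly elliptic with bounded coefficients by the interior estimates of Theorem \ref{thm:aprior estimates} — the strong maximum principle and the Hopf boundary lemma force $w\equiv0$, i.e. $u\equiv u_{\lambda^{*}}$ on $\hat\Omega\setminus\overline{B_{\lambda^{*}}}$. But then $u$ equals $u_{\lambda^{*}}$, which is bounded near any finite point of $\partial\hat\Omega$ and decays like $|x|^{2-n}$ at $\infty$ (since $(\lambda^{*})^{2}x/|x|^{2}$ then lies in $\overline{B_{\lambda^{*}}}\subset\hat\Omega$, where $u$ is smooth and bounded), so $u$ would be bounded near a finite point of $\partial\hat\Omega$, or $(\hat\Omega,g)$ would have a path of finite length out to $\infty$ — either way contradicting completeness of $g$. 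Hence $\lambda^{*}=\lambda_{0}>1$, so $u\ge u_{1}=u^{*}$ on $\hat\Omega\setminus\overline{B_{1}(0)}$, completing the proof.

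The delicate point is this last step. The equation $\sigma_{k}(\lambda(A_{u^{4/(n-2)}g_{E}}))=\binom{n}{k}(\frac12)^{k}u^{4k/(n-2)}$ is critical and conformally invariant (all the round bubbles are solutions), so the naive comparison principle is unavailable and one really must run the moving-sphere scheme, invoking the strong maximum principle and the Hopf lemma for the linearized operator on the cone $\Gamma_{k}^{+}$; one also has to feed in completeness of $g$ to get the boundary comparison on $\partial\hat\Omega$, which for $\Omega=\mathbb{S}^{n}\setminus P$ with $P$ finite — the case needed in the sequel — is available in the sharp form $u\asymp d(\cdot,P)^{\frac{2-n}{2}}$ from the Delaunay-type asymptotics of Theorem \ref{Holder regularity} and \cite{CHY}. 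A routine secondary issue is the bookkeeping on the possibly unbounded domain $\hat\Omega$, handled by exhausting it with annuli $\{\lambda\le|x|\le R\}$ and letting $R\to\infty$.
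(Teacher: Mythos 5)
Your strategy is sound and reaches the right conclusion, but it is a genuinely different implementation from the paper's. The paper stereographically projects from a point $x_{0}$ lying on the boundary sphere $S=\partial B$ itself, so that $S$ becomes a hyperplane $H_{0}$, the singular set lies strictly on one side of it, and --- crucially --- the point at infinity is a \emph{regular} point of the metric, with the expansion $v(x)=|x|^{2-n}\bigl(a_{0}+a_{i}x_{i}|x|^{-2}+\cdots\bigr)$ coming from smoothness of the Kelvin transform at the origin; convexity becomes $\partial v/\partial x_{n}<0$ on $H_{0}$ and is proved by moving the planes $H_{\lambda}$ down to $\lambda=0$, following \cite{GNN,V4,GMS}. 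You instead project from a point of $\mathbb{S}^{n}\setminus\Omega$, keep $B$ as a round ball, recast convexity as $\partial_{r}\log u\ge-\tfrac{n-2}{2}$ on $\partial B_{1}$ (that reduction, and the identity $\partial_{r}(u-u^{*})=2\partial_{r}u+(n-2)u$ at $r=1$, are correct), and run moving spheres centered at the center of $B$. The two schemes are conformally conjugate, and both defer the same nontrivial machinery --- ellipticity of the linearized operator along the segment joining the two admissible solutions in $\Gamma_{k}^{+}$, the strong maximum principle and the Hopf lemma, cf. \cite{V4,LL2} --- which the paper also only sketches. The one place where your setup is strictly weaker for the theorem \emph{as stated} is the behavior at infinity: when $\mathbb{S}^{n}\setminus\Omega$ has empty interior you are forced to project from a boundary point, so $\infty\in\partial\hat\Omega$ is a singular point, and your comparison $u\gg u_{\lambda}\asymp|x|^{2-n}$ there invokes the Delaunay-type asymptotics of Theorem \ref{Holder regularity}, which are available only for isolated singularities; the paper's choice of projection point makes infinity regular and avoids this issue entirely. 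Likewise, ``completeness forces $u$ to blow up at every finite boundary point'' is not immediate from completeness alone and needs a Harnack argument (which Corollary \ref{cor: high estimate} supplies). For the application actually made in the paper, $\Omega=\mathbb{S}^{n}\setminus P$ with $P$ finite, your argument goes through; for general $\Omega$ the paper's projection is the cleaner choice.
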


For the constant scalar curvature, Schoen \cite{Sc0} has proved this
theorem. For constant $Q$-curvature of order $\gamma$, inspired
by \cite{QR}, Gonz\'{a}lez-Mazzeo-Sire \cite{GMS} have proved
this type of theorem by the moving plane method. The moving plane method
was used in \cite{V4} to prove the Liouville theorem for $\sigma_{k}$
Yamabe problem. In the following, we will follow Gonz\'{a}lez-Mazzeo-Sire's
argument and give a brief description of the proof.
\begin{proof}
Denote the boundary of the geodesic ball $B_{r}(p)\subset\Omega$
by $S.$ By the stereographic projection for $x_{0}\in S$, with the
antipode on the plane, the boundary $S$ corresponds to the hyperplane
in $\mathbb{R}^{n}$ and we denote it as $H_{0}=\{(x_{1},\cdots,x_{n})|x_{n}=0\}$.
$\Omega$ becomes $\Omega_{1}$ and the singularities ($\partial\Omega_{1}$)
are below $H_{0}$, which are located on $\{(x_{1},\cdots,x_{n})|x_{n}<0\}$.
By the stereographic projection $g=v^{\frac{4}{n-2}}g_{E}$ and the
corresponding equation is, on $\Omega_{1}=\mathbb{R}^{n}\backslash\{p_{1},\cdots p_{q}\},$
\[
\sigma_{k}(-\frac{2}{n-2}v^{-1}\nabla^{2}v+\frac{2n}{(n-2)^{2}}v^{-2}dv\otimes dv-\frac{2}{(n-2)^{2}}v^{-2}|dv|^{2})=\binom{n}{k}(\frac{1}{2})^{k}v^{\frac{4k}{n-2}}.
\]

To prove that $S$ is geodesically convex with respect to $g$, we
only need to prove that the hyperplane $H_{0}$ is geodesically convex
with respect to $g=v^{\frac{4}{n-2}}g_{E}.$ The second fundamental
form of the hyperplane $H_{0}$ is 
\[
-\frac{4}{n-2}v^{-1}\frac{\partial v}{\partial x_{n}}I|_{H_{0}}.
\]

Next we use the moving plane method to show that $\frac{\partial v}{\partial x_{n}}<0$
on $H_{0},$ which yields the conclusion. Denote 
\[
v_{\lambda}(x)=v(x_{1},\cdots,x_{n-1},2\lambda-x_{n})
\]
 and $H_{\lambda}=\{(x_{1},\cdots,x_{n-1},x_{n})|x_{n}=\lambda\}$.
The proof is standard and we refer to \cite{V4} \cite{GMS} as well
as \cite{GNN} for more details. For completeness we just write part
of the proof.

We claim that there exists a constant $\lambda_{0}>0$ such that for
$\lambda>\lambda_{0}$, we have $w_{\lambda}(x)=v(x)-v_{\lambda}(x)<0$
on $\Sigma_{\lambda}=\{(x_{1},\cdots,x_{n})|x_{n}>\lambda\}$ and
$\frac{\partial w_{\lambda}}{\partial x_{n}}|_{H_{\lambda}}<0.$

Let $u(y)=v(\frac{y}{|y|^{2}})|y|^{2-n}$. When $y\rightarrow0,$
$\frac{y}{|y|^{2}}\rightarrow\infty$. For $|x|$ big enough $v(x)|x|^{n-2}$
is smooth and we know that $u(y)=v(\frac{y}{|y|^{2}})|y|^{2-n}$ is
smooth near $y=0$. By Taylor expansion, we have 
\[
u(y)=a_{0}+u_{i}(0)y_{i}+\frac{1}{2}u_{ij}(0)y_{i}y_{j}+o(|y|^{2}).
\]
For more details of Taylor expansion, we refer to \cite{V4}. 

Actually similar to the classical paper Gidas-Ni-Nirenberg \cite{GNN},
we have the following 

\[
v(x)=\frac{1}{|x|^{n-2}}(a_{0}+\frac{a_{i}x_{i}}{|x|^{2}}+\frac{a_{ij}x_{i}x_{j}}{|x|^{4}}+o(\frac{1}{|x|^{2}})).
\]

As shown in \cite{GNN,V4}, there exists a constant $\lambda_{0}>0$ such that
for $\lambda>\lambda_{0}$, we have $w_{\lambda}(x)=v(x)-v_{\lambda}(x)<0$
on $\Sigma_{\lambda}=\{(x_{1},\cdots,x_{n})|x_{n}>\lambda\}$. By
Hopf's lemma, on every compact sets of $H_{\lambda},$ $\frac{\partial w_{\lambda}}{\partial x_{n}}|_{H_{\lambda}}<0.$
By the argument of \cite{V4,GNN}, we know if there exists a $\lambda>0$
such that $w_{\lambda}<0$ in $\Sigma_{\lambda}$, then there exists
some $\varepsilon>0$ such that $w_{\lambda_{1}}<0$ for $\lambda_{1}\in[\lambda-\varepsilon,\lambda].$
Then the moving plane can be continued until touching the singularities.
Thus, $\frac{\partial w_{\lambda}}{\partial x_{n}}|_{H_{0}}=2\frac{\partial v}{\partial x_{n}}|_{H_{0}}<0.$
\end{proof}
By Theorem \ref{thm:aprior estimates} and Theorem \ref{thm:Uniform decay}
we get
\begin{cor}
\label{cor: high estimate} Suppose $\sigma_{k}(g^{-1}A_{g})$ is
a positive constant $\mathbb{S}^{n}\backslash P$, where $g=u^{\frac{4}{n-2}}g_{0}$
with $\lambda(A_{g})\in\Gamma_{k}^{+}$. For any compact subset $K\subset \mathbb{S}^{n}\backslash P$,
we have 
\[
|\nabla_{g_{0}}\ln u|_{C^{0}(K)}\le C,\quad|\nabla_{g_{0}}^{2}\ln u|_{C^{0}(K)}\le C,
\]
where $C$ depends on \textup{$K,dist(K,\mathbb{S}^{n}\backslash P),g_{0}.$} 
\end{cor}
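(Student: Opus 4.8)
The plan is to combine the local interior gradient and Hessian estimates of Theorem~\ref{thm:aprior estimates}, applied with the \emph{fixed smooth} background metric $g=g_{0}$, with the uniform upper bound of Theorem~\ref{thm:Uniform decay}.

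First I would check that the hypotheses of Theorem~\ref{thm:aprior estimates} hold for $(M^{n},g)=(\mathbb{S}^{n},g_{0})$: the round sphere is complete, has injectivity radius $i_{0}=\pi>0$ and constant sectional curvature, so that $R_{ijkl}$, $|\nabla_{g_{0}}R_{ijkl}|$ and $|\nabla_{g_{0}}^{2}R_{ijkl}|$ are bounded (indeed $\nabla_{g_{0}}R_{ijkl}\equiv 0$). Since $c$ is normalized, the right-hand side in that theorem is the fixed positive constant $h\equiv C_{0}:=\binom{n}{k}(\frac{1}{2})^{k}$, so $\nabla h\equiv 0$ and $\nabla^{2}h\equiv 0$; moreover the cone/ellipticity condition required is exactly the standing hypothesis $\lambda(A_{g})\in\Gamma_{k}^{+}$, and $u$ is smooth on $\mathbb{S}^{n}\backslash P$ by elliptic regularity, hence admissible in Theorem~\ref{thm:aprior estimates}.

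Now fix a compact set $K\subset\mathbb{S}^{n}\backslash P$ and set $d_{0}:=\mathrm{dist}_{g_{0}}(K,P)>0$. Let $K'$ be the closed $d_{0}/4$-neighbourhood of $K$; it is compact, contained in $\mathbb{S}^{n}\backslash P$, and satisfies $\mathrm{dist}_{g_{0}}(K',P)\ge \frac{3}{4}d_{0}$. Choose $r>0$ with $3r\le \frac{1}{2}i_{0}$ and $3r<\frac{1}{4}d_{0}$, so that $\overline{B_{3r}(x)}\subset K'$ for every $x\in K$. Applying Theorem~\ref{thm:aprior estimates} on the ball $B_{3r}(x)$ and using $\nabla h\equiv 0$, $\nabla^{2}h\equiv 0$, I obtain for each $x\in K$
\[
\big|\nabla_{g_{0}}(\log u)\big|_{g_{0}}(x)\le C+C\sup_{B_{2r}(x)}u^{\frac{2}{n-2}},\qquad \big|\nabla_{g_{0}}^{2}(\log u)\big|_{g_{0}}(x)\le C+C\sup_{B_{2r}(x)}u^{\frac{4}{n-2}},
\]
where $C$ depends only on $n$, $k$, $r$ and $g_{0}$. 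By Theorem~\ref{thm:Uniform decay} there is a constant $C$, independent of $u$, with $u(y)\le C\,d(y,P)^{\frac{2-n}{2}}$ on $\mathbb{S}^{n}\backslash P$; since $B_{2r}(x)\subset K'$ we have $d(y,P)\ge \frac{3}{4}d_{0}$ there, hence $\sup_{B_{2r}(x)}u\le C(\frac{3}{4}d_{0})^{\frac{2-n}{2}}=:M(K)$. Substituting this into the two displayed inequalities bounds $|\nabla_{g_{0}}\log u|_{g_{0}}$ and $|\nabla_{g_{0}}^{2}\log u|_{g_{0}}$ at each point of $K$ by a constant depending only on $n$, $k$, $g_{0}$, $K$ and $d_{0}=\mathrm{dist}(K,P)$; taking the supremum over $x\in K$ gives the claimed $C^{0}(K)$ bounds.

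The argument is essentially bookkeeping and I do not expect a real obstacle. The only points requiring care are that the background metric in Theorem~\ref{thm:aprior estimates} must be taken to be the fixed smooth $g_{0}$ rather than the singular $g$, and that Theorem~\ref{thm:Uniform decay} supplies precisely the control on the $\sup u$ factors appearing in those interior estimates, with a constant independent of $u$. One could alternatively cover $K$ by finitely many balls $B_{r}(x_{j})$, but this is unnecessary since the pointwise bound obtained above is already uniform over $K$.
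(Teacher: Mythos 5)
Your proposal is correct and is exactly the argument the paper intends: the corollary is stated in the paper as an immediate consequence of Theorem \ref{thm:aprior estimates} (applied with the fixed smooth background $g_{0}$ and constant right-hand side, so the $\nabla h$ terms vanish) together with the uniform upper bound of Theorem \ref{thm:Uniform decay} controlling the $\sup u$ factors on a slightly enlarged compact set. Your bookkeeping with the $d_{0}/4$-neighbourhood just makes explicit what the paper leaves implicit.
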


\section{Proof of the Main theorem}

In this section, we give the proof of the main theorem.
\begin{proof}
By Corollary \ref{cor: high estimate}, $|u_{i}|_{C^{2}(K)}\le C_{K}$
on any compact set $K$ in $\mathbb{S}^{n}\backslash P:=\Omega.$ We know that
there exists a subsequence (still denoted as $u_{i}$) converging
to a function $u\in C^{1,1}(\mathbb{S}^{n}\backslash P)$ in $C_{loc}^{1,1}$
sense. We still need to prove that $u$ is actually smooth and positive
on $\mathbb{S}^{n}\backslash P.$

Assume that there exists a point $Q\in \mathbb{S}^{n}\backslash P$ such that
$u(Q)=0.$ Without loss of generality, let $\varepsilon_{i}=u_{i}(Q)$
and $\varepsilon_{i}\rightarrow0$ as $i\rightarrow\infty$. Define
$v_{i}:=\varepsilon_{i}^{-1}u_{i}$ and $\inf_{\mathbb{S}^{n}\backslash P}v_{i}=1.$
Since $u_{i}$ satisfies $\sigma_{k}(g_{u_{i}}^{-1}A_{g_{u_{i}}})=\binom{n}{k}(\frac{1}{2})^{k},$
we have 
\[
\sigma_{k}(g_{v_{i}}^{-1}A_{g_{v_{i}}})=\binom{n}{k}(\frac{1}{2})^{k}\varepsilon_{i}^{\frac{4k}{n-2}}.
\]

Since $u_{i}$ satisfies Lemma \ref{thm:Uniform decay} and Corollary
\ref{cor: high estimate}, for any $B_{2R}(y)\subset\subset \mathbb{S}^{n}\backslash P$
we have $\sup_{B_{R}(y)}u_{i}\le c\cdot\inf_{B_{R}(y)}u_{i},$ where
$c=c(n,R)$. Then $\sup_{B_{R}(y)}v_{i}\le c$ and $\ln v_{i}$ is
locally uniformly bounded on $\Omega.$ Now there exists a $v_{\infty}\in C^{1,1}(\Omega)$
such that $v_{i}\rightarrow v_{\infty}$ in $C_{loc}^{1,\alpha}(\Omega)$
by Theorem \ref{thm:aprior estimates}.\textcolor{red}{}

At $p_{1}\in P$, we can write the metric $g_{i}$ on cylindrical
coordinate. Here $D_{k}(g_{i},p_{1})$ has uniform positive lower
bound. Let $g_{i}=(wu_{i})^{4/(n-2)}g_{c}$ and $g_{0}=w^{4/(n-2)}g_{c}$,
where $g_{c}=dt^{2}+d\theta^{2}.$ For convenience, we denote $\{\frac{\partial}{\partial t},\frac{\partial}{\partial\theta_{i}}\}$
by $\{\frac{\partial}{\partial y_{1}},\cdots,\frac{\partial}{\partial y_{n}}\}$.
Denote 
\[
\bar{g}_{i}:=\varepsilon_{i}^{-4/(n-2)}g_{i}=(wv_{i})^{\frac{4}{n-2}}g_{c},
\]
and then
\[
\bar{g}_{i}\rightarrow g_{\infty}=(wv)^{\frac{4}{n-2}}g_{c}\quad\text{in}\quad C_{loc}^{1,\alpha}.
\]
As

\[
\bar{A}_{k}^{j}=\bar{g}^{jl}\bar{A}_{lk}=\bar{g}^{jl}(A_{lk})=\varepsilon_{i}^{4/(n-2)}A_{k}^{j},
\]
we have

\[
\mathring{\bar{H}}_{i,1}^{1}=\mathring{H}_{i,1}^{1}\varepsilon_{i}^{4k/(n-2)}.
\]

Therefore

\begin{align*}
\int_{\Sigma_{t_{0}}}\mathring{H}_{i,1}^{1}u_{i}^{2n/(n-2)}w^{2n/(n-2)}d\theta & =\int_{\Sigma_{t_{0}}}\mathring{\bar{H}}_{i,1}^{1}\varepsilon_{i}^{-4k/(n-2)}u_{i}^{2n/(n-2)}w^{2n/(n-2)}d\theta\\
 & =\int_{\Sigma_{t_{0}}}\mathring{\bar{H}}_{i,1}^{1}\varepsilon_{i}^{(2n-4k)/(n-2)}(v_{i}w)^{\frac{2n}{n-2}}d\theta,
\end{align*}
where 

\[
\int_{\Sigma_{t_{0}}}\mathring{\bar{H}}_{i,1}^{1}(v_{i}w)^{\frac{2n}{n-2}}d\theta
\]
 is bounded due to the local uniform $C^{2}$ estimate of $v_{i}$. 

For $k<n/2$, $\lim_{i\rightarrow\infty}\int_{\Sigma_{t_{0}}}\mathring{H}_{i,1}^{1}u_{i}^{2n/(n-2)}w^{2n/(n-2)}d\theta=0,$
which contradicts the uniform positive lower bound of $D_{k}(g_{i},p_{1})$.
Now we obtain that $u$ is positive on $\Omega$ and furthermore
$u_{i}$ has positive lower bound. Therefore by the classical Schauder
theory $u_{i}$ converges to $u$ in $C_{loc}^{\infty}$ sense and
$u$ satisfies $\sigma_{k}(\lambda(A_{u}))=\binom{n}{k}(\frac{1}{2})^{k}$
with non-vanishing $D_{k}(g_{u},p_{1})$, which is the limit of $D_{k}(g_{i},p_{1})$.
The non-vanishing $D_{k}(g_{u},p_{1})$ implies that $g_{u}$ has
non-removable singularities. With \cite{CHY,HLT-1}'s results, we
know that the metric is complete.
\end{proof}

\section{Appendix}

Following the argument for the scalar curvature by
Pollack \cite{Pollack}, we compute the accurate $D_{k}(g,p)$ by
Theorem \ref{Holder regularity}.
%
\begin{proof}[Proof of Lemma  \ref{lem:k-invariant} ]
Without loss of generality, we assume that $p$ is the north pole. Let
$(t,\theta)$ be the cylindrical coordinate about $p$ and $X_{p}=\frac{\partial}{\partial t}$.
In local coordinate, we write $g=u^{4/(n-2)}g_{c}:=u^{4/(n-2)}(dt^{2}+d\theta^{2})$. 

The deleted ball $B_{r_{0}}(p)\backslash\{p\}$ corresponds to 
\[
C_{t_{0}}^{n}:=\{(t,\theta):\theta\in \mathbb{S}^{n-1},t\ge t_{0}\}
\]
and we denote

\[
\Sigma_{t_{0}}:=\{(t,\theta):\theta\in \mathbb{S}^{n-1},t=t_{0}\}
\]
for some $t_{0}$. Define $\nu_{g}=u^{-2/(n-2)}\frac{\partial}{\partial t}$
and $d\sigma_{g}=u^{2(n-1)/(n-2)}d\theta$. We have 

\[
\mathring{H}_{a}^{b}\nu_{b}X^{a}=\mathring{H_{1}^{1}}\nu_{1}X^{1}=\mathring{H_{1}^{1}}u^{\frac{2}{n-2}},
\]
where

\begin{align*}
\mathring{H}_{1}^{1}= & H_{1}^{1}-\frac{H_{c}^{c}}{n}\\
= & \frac{1}{(k-1)!}\sum_{\substack{
i_{1},\cdots i_{k-1}=1\\
j_{1},\cdots,j_{k-1},l
}}^{n}\delta\left(\begin{array}{ccc}
i_{1} & \cdots & 1\\
j_{1} & \cdots & l
\end{array}\right)A_{i_{1}}^{j_{1}}\cdots A_{i_{k-1}}^{j_{k-1}}A_{1}^{l}\\
 & -\frac{1}{n}\frac{1}{(k-1)!}\sum_{\substack{
i_{1},\cdots i_{k-1},c=1\\
j_{1},\cdots,j_{k-1},l
}}^{n}\delta \left(\begin{array}{ccc}
i_{1} & \cdots & c\\
j_{1} & \cdots & l
\end{array}\right)A_{i_{1}}^{j_{1}}\cdots A_{i_{k-1}}^{j_{k-1}}A_{c}^{l}.
\end{align*}
From Theorem \ref{Holder regularity}, we know that there exists a
corresponding singular solution $\bar{u}_{h}(t)$ such that for some positive
$\alpha$
\[
u(t)=\bar{u}_{h}(t)+O(e^{-\alpha t}).
\]
Here $\bar{u}_{h}$ satisfies (\ref{eq:ode in cylinder}) and (\ref{eq:u_h definition}). For simplicity, we use $u_h$ to represent $\bar{u}_h$ in the following computations. 

By 
\[
A=A_{c}-\frac{2}{n-2}u^{-1}\nabla_{c}^{2}u+\frac{2n}{(n-2)^{2}}u^{-2}du\otimes du-\frac{2}{(n-2)^{2}}u^{-2}|du|_{g_{c}}^{2}g_{c},
\]

\[
A_{c}=-\frac{1}{2}dt^{2}+\frac{1}{2}d\theta^{2},
\]
we have 

\begin{align*}
 & A(\frac{\partial}{\partial t},\frac{\partial}{\partial t})\\
 & =-\frac{1}{2}-\frac{2}{n-2}u_{h}^{-1}\nabla_{c}^{2}u_{h}(\frac{\partial}{\partial t},\frac{\partial}{\partial t})+\frac{2n}{(n-2)^{2}}u_{h}^{-2}\frac{\partial u_{h}}{\partial t}\frac{\partial u_{h}}{\partial t}-\frac{2}{(n-2)^{2}}u_{h}^{-2}|du_{h}|_{g_{c}}^{2}+(u_{h}^{-2}+u_{h}^{-1})O(e^{-\alpha t})\\
 & =-\frac{1}{2}-\frac{2}{n-2}u_{h}^{-1}\frac{\text{\ensuremath{\partial^{2}u_{h}}}}{\partial t^{2}}+\frac{2n-2}{(n-2)^{2}}u_{h}^{-2}\frac{\partial u_{h}}{\partial t}\frac{\partial u_{h}}{\partial t}+(u_{h}^{-2}+u_{h}^{-1})O(e^{-\alpha t}),
\end{align*}

\[
A(\frac{\partial}{\partial t},\frac{\partial}{\partial\theta_{i}})=O(e^{-\alpha t})(u_{h}^{-1}+u_{h}^{-2}),
\]
and 

\[
A(\frac{\partial}{\partial\theta_{i}},\frac{\partial}{\partial\theta_{j}})=\frac{1}{2}\delta_{ij}-\frac{2}{(n-2)^{2}}u_{h}^{-2}(\frac{\partial u_{h}}{\partial t})^{2}\delta_{ij}+O(e^{-\alpha t})
\]
for $i,j=2,\cdots,n.$

Therefore 

\begin{align*}
 & \frac{1}{(k-1)!}\sum_{\substack{
i_{1},\cdots i_{k-1}=1\\
j_{1},\cdots,j_{k-1},l
}}^{n}\delta\left(\begin{array}{ccc}
i_{1} & \cdots & 1\\
j_{1} & \cdots & l
\end{array}\right)A_{i_{1}}^{j_{1}}\cdots A_{i_{k-1}}^{j_{k-1}}A_{1}^{l}\\
 & =O(e^{-\alpha t})+\frac{1}{(k-1)!}\sum_{\substack{
i_{1},\cdots i_{k-1}=1\\
j_{1},\cdots,j_{k-1}
}}^{n}\delta\left(\begin{array}{ccc}
i_{1} & \cdots & 1\\
i_{1} & \cdots & 1
\end{array}\right)A_{i_{1}}^{i_{1}}\cdots A_{i_{k-1}}^{i_{k-1}}A_{1}^{1}\\
 & =O(e^{-\alpha t})+\frac{1}{(k-1)!}\sum_{i_{1}\cdots i_{k-1}\neq1}^{n}\delta \left(\begin{array}{ccc}
i_{1} & \cdots & i_{k-1}\\
i_{1} & \cdots & i_{k-1}
\end{array}\right)A_{i_{1}}^{i_{1}}\cdots A_{i_{k-1}}^{i_{k-1}}A_{1}^{1}\\
 & =O(e^{-\alpha t})+A_{1}^{1}\sigma_{k-1}(g_{u_{h}}^{-1}A_{g_{u_{h}}}|_{i=2,\cdots n}),
\end{align*}
and 
\begin{align*}
 & -\frac{1}{n}\frac{1}{(k-1)!}\sum_{\substack{
i_{1},\cdots i_{k-1},c=1\\
j_{1},\cdots,j_{k-1},l
}}^{n}\delta \left(\begin{array}{ccc}
i_{1} & \cdots & c\\
j_{1} & \cdots & l
\end{array}\right)A_{i_{1}}^{j_{1}}\cdots A_{i_{k-1}}^{j_{k-1}}A_{c}^{l}\\
 & =O(e^{-\alpha t})-\frac{1}{n}\frac{1}{(k-1)!}\sum_{\substack{
i_{1},\cdots i_{k-1}=1\\
j_{1},\cdots,j_{k-1},c
}}^{n}\delta\left(\begin{array}{ccc}
i_{1} & \cdots & c\\
j_{1} & \cdots & c
\end{array}\right)A_{i_{1}}^{j_{1}}\cdots A_{i_{k-1}}^{j_{k-1}}A_{c}^{c}\\
 & =O(e^{-\alpha t})-\frac{k}{n}\bigg(\sigma_{k-1}(g_{u_{h}}^{-1}A_{g_{u_{h}}}|_{i=2,\cdots n})A_{1}^{1}+\sigma_{k}(g_{u_{h}}^{-1}A_{g_{u_{h}}}|_{i=2,\cdots,n})\bigg).
\end{align*}
Furthermore, we get 
\begin{align*}
\mathring{H}_{1}^{1}= & O(e^{-\alpha t})+A_{1}^{1}\sigma_{k-1}(g_{u_{h}}^{-1}A_{g_{u_{h}}}|_{i=2,\cdots n})(1-\frac{k}{n})-\frac{k}{n}\sigma_{k}(g_{u_{h}}^{-1}A_{g_{u_{h}}}|_{i=2,\cdots,n})\\
= & O(e^{-\alpha t})+(1-\frac{k}{n})u_{h}^{-\frac{4k}{n-2}}A_{11}\sigma_{k-1}(g_{c}^{-1}A_{g_{u}}|_{i=2,\cdots n})-\frac{k}{n}u_{h}^{-\frac{4k}{n-2}}\sigma_{k}(g_{c}^{-1}A_{g_{u_{h}}}|_{i=2,\cdots,n})\\
= & O(e^{-\alpha t})-\frac{k}{n}u_{h}^{-\frac{4k}{n-2}}C_{n-1}^{k}\left[\frac{1}{2}-\frac{2}{(n-2)^{2}}u_{h}^{-2}(\frac{\partial u_{h}}{\partial t})^{2}\right]^{k}\\
&+(1-\frac{k}{n})u_{h}^{-\frac{4k}{n-2}}\left(-\frac{1}{2}-\frac{2}{n-2}u_{h}^{-1}\frac{\text{\ensuremath{\partial^{2}u_{h}}}}{\partial t^{2}}+\frac{2n-2}{(n-2)^{2}}u_{h}^{-2}(\frac{\partial u_{h}}{\partial t})^{2}\right)\\
  &\times\left[\frac{1}{2}-\frac{2}{(n-2)^{2}}u_{h}^{-2}(\frac{\partial u_{h}}{\partial t})^{2}\right]^{k-1}C_{n-1}^{k-1}\\
= & O(e^{-\alpha t})+u^{-\frac{4k}{n-2}}C_{n-1}^{k-1}\frac{n-k}{n}\bigg[\frac{1}{2}-\frac{2}{(n-2)^{2}}u_{h}^{-2}(\frac{\partial u_{h}}{\partial t})^{2}\bigg]^{k-1}K_{h}(t),
\end{align*}
where $K_{h}(t)=-\frac{1}{2}-\frac{2}{n-2}u_{h}^{-1}\frac{\text{\ensuremath{\partial^{2}u_{h}}}}{\partial t^{2}}+\frac{2n-2}{(n-2)^{2}}u_{h}^{-2}(\frac{\partial u_{h}}{\partial t})^{2}-\frac{1}{2}+\frac{2}{(n-2)^{2}}u_{h}^{-2}(\frac{\partial u_{h}}{\partial t})^{2}.$

Without confusion, we sometimes use $u_{h,t}$, $u_{h,tt}$ as $\frac{\partial u_h}{\partial t}$ and $\frac{\partial^2 u_h}{\partial t^2}$ respectively.

As $u_{h}$ satisfies 
\[
\frac{1}{2}=\left(1-(\frac{2}{n-2})^{2}(\frac{u_{t}}{u})^{2}\right)^{k-1}\left(\frac{k}{n}\frac{2}{n-2}\left(-\frac{u_{tt}}{u}+\frac{u_{t}^{2}}{u^{2}}\right)+(\frac{1}{2}-\frac{k}{n})\left(1-(\frac{2}{n-2})^{2}\frac{u_{t}^{2}}{u^{2}}\right)\right)u^{-\frac{4k}{n-2}},
\]
we obtain 
\begin{equation}
-\frac{u_{h,tt}}{u_{h}}=\frac{n(n-2)}{2k}\left[\frac{1}{2}u_{h}^{\frac{4k}{n-2}}\left(1-(\frac{2}{n-2})^{2}(\frac{u_{h,t}}{u_{h}})^{2}\right)^{-k+1}-(\frac{u_{h,t}}{u_{h}})^{2}\left(\frac{2k}{n(n-2)}-\frac{n-2k}{2n}(\frac{2}{n-2})^{2}\right)+\frac{k}{n}-\frac{1}{2}\right].\label{eq:second derivative-1}
\end{equation}
By (\ref{eq:second derivative-1}), 
\begin{align*}
K_{h}(t)= & -1+\frac{2n}{(n-2)^{2}}u_{h}^{-2}(\frac{\partial u_{h}}{\partial t})^{2}\\
 & +\frac{n}{k}\bigg[\frac{1}{2}u_{h}^{\frac{4k}{n-2}}\left(1-(\frac{2}{n-2})^{2}(\frac{u_{h,t}}{u_{h}})^{2}\right)^{-k+1}-(\frac{u_{h,t}}{u_{h}})^{2}\left(\frac{2k}{n(n-2)}-\frac{n-2k}{2n}(\frac{2}{n-2})^{2}\right)+\frac{k}{n}-\frac{1}{2}\bigg]\\
= & -\frac{n}{2k}+\frac{n}{k}\frac{1}{2}u_{h}^{\frac{4k}{n-2}}\left(1-(\frac{2}{n-2})^{2}(\frac{u_{h,t}}{u_{h}})^{2}\right)^{-k+1}+\frac{u_{h,t}^{2}}{u_{h}^{2}}\frac{2n}{k(n-2)^{2}}.
\end{align*}
So 
\begin{align*}
\mathring{H}_{1}^{1}= & O(e^{-\alpha t})+C_{n-1}^{k-1}\frac{n-k}{n}\bigg\{-\frac{n}{2k}u_{h}^{-k(\frac{4}{n-2})}\times[\frac{1}{2}-\frac{2}{(n-2)^{2}}u_{h}^{-2}(\frac{\partial u_{h}}{\partial t})^{2}]^{k-1}+\frac{n}{2k}(\frac{1}{2})^{k-1}\\
 & +\frac{2n}{k(n-2)^{2}}\frac{u_{h,t}^{2}}{u_{h}^{2}}\left(\frac{1}{2}-\frac{2}{(n-2)^{2}}\frac{u_{h,t}^{2}}{u_{h}^{2}}\right)^{k-1}u_{h}^{-k(\frac{4}{n-2})}\bigg\}\\
= & O(e^{-\alpha t})+C_{n-1}^{k-1}\frac{n-k}{n}\bigg\{\left(\frac{1}{2}-\frac{2}{(n-2)^{2}}\frac{u_{h,t}^{2}}{u_{h}^{2}}\right)^{k-1}u_{h}^{-k(\frac{4}{n-2})}(-\frac{n}{2k}+\frac{2n}{k(n-2)^{2}}\frac{u_{h,t}^{2}}{u_{h}^{2}})+\frac{n}{2k}(\frac{1}{2})^{k-1}\bigg\}\\
= & O(e^{-\alpha t})+C_{n-1}^{k-1}\frac{n-k}{n}\frac{n}{2k}(\frac{1}{2})^{k-1}\bigg\{1-u_{h}^{-\frac{4k}{n-2}}\left(1-(\frac{2}{n-2})^{2}(\frac{u_{h,t}}{u_{h}})^{2}\right)^{k}\bigg\}.
\end{align*}
Furthermore, it holds that

\begin{align*}
 & \int_{\Sigma_{t_{0}}}\mathring{H}_{a}^{b}\nu_{b}X^{a}d\sigma_{g}\\
 & =\int_{\Sigma_{t_{0}}}\bigg[O(e^{-\alpha t})+C_{n-1}^{k-1}\frac{n-k}{n}\frac{n}{2k}(\frac{1}{2})^{k-1}\left(1-u_{h}^{-\frac{4k}{n-2}}\left(1-(\frac{2}{n-2})^{2}(\frac{u_{h,t}}{u_{h}})^{2}\right)^{k}\right)\bigg]u_{h}^{\frac{2n}{n-2}}\\
 & =\int_{\Sigma_{t_{0}}}\big[O(e^{-\alpha t})+C_{n-1}^{k-1}\frac{n-k}{n}\frac{n}{2k}(\frac{1}{2})^{k-1}h\big].
\end{align*}
As $D_{k}(p,g)$ is independent of $t_{0},$ we know 
\[
D_{k}(p,g)=C_{n-1}^{k-1}\frac{n-k}{n}\frac{n}{2k}(\frac{1}{2})^{k-1}hw_{n-1}.
\]
\end{proof}


\begin{thebibliography}{Pollack}
\bibitem[A]{A}T. Aubin, \emph{$\acute{E}$quations diff$\acute{e}$rentielles
non lin$\acute{e}$aires et probl$\grave{e}$me de Yamabe concernant
la courbure scalaire}. J. Math. Pures Appl. 55 (1976), 269--296.

\bibitem[CGS]{CGS} L.A.Caffarelli, B. Gidas, J. Spruck, \emph{Asymptotic
symmetry and local behavior of semilinear elliptic equations with
critical Sobolev growth}, Commun. Pure Appl. Math. 42 (1989), 271--297.

\bibitem[CHY]{CHY} S.Y. A. Chang, Z.C. Han and P. Yang, \emph{Classification
of singular radial solutions~ to the $\sigma_{k}$ Yamabe equation
on annular domains}, J. Differential Equations 216 (2005), 482-501.

\bibitem[CHY2]{CHY2} S.Y.A. Chang, Z.C. Han and P. Yang, \emph{On
the prescribing $\sigma_{2}$ curvature equation on $\mathbb{S}^{4}$}, Calc.
Var. Partial Differential Equations 40 (2011), 539--565.

\bibitem[CGY1]{CGY1} S.Y.A. Chang, M. Gursky and P. Yang, \emph{An
equation of Monge-Ampere type in conformal geometry, and four-manifolds
of positive Ricci curvature}, Annals of Math. 155 (2002), 711-789.

\bibitem[CGY2]{CGY2} S.Y.A. Chang, M. Gursky and P. Yang, \emph{A
prior estimate for a class of nonlinear equations on 4-manifolds},
Journal D'Analyse Journal Mathematique, special issue in memory of
Thomas Wolff, 87 (2002), 151-186.

\bibitem[CGY3]{CGY3} S.Y.A. Chang, M. Gursky and P. Yang, \emph{Entire
solutions of a fully nonlinear equation}, Lectures on partial differential
equations, 43--60, Inter. Press, 2002.

\bibitem[CGY4]{CGY4} S.Y.A. Chang, M. Gursky and P. Yang, \emph{A
conformally invariant sphere theorem in four dimension}, Publications
Math. Inst. Hautes Etudes Sci. 98 (2003), 105--143.

\bibitem[CQY]{CQY} S.Y.A. Chang, J. Qing and P. Yang, \emph{On a
conformal gap and finiteness theorem for a class of four-manifolds},
Geom. Funct. anal. 17 (2007), 404 -- 434.

\bibitem[SC]{SC} S. Chen, \emph{Local estimates for some fully nonlinear
elliptic equations}, Int. Math. Res. Not., 55 (2005), 3403--3425.

\bibitem[FW]{FW} H. Fang and W. Wei, \emph{$\sigma_{2}$ Yamabe problem
on conic 4-spheres}, Calc. Var. (2019) 58: 119.

\bibitem[FW2]{FW2}H. Fang and W. Wei,\emph{ $\sigma_{2}$ Yamabe
problem on conic spheres II:boundary compactness of the moduli}, arXiv:1909.13418.

\bibitem[FW3]{FW3}H. Fang and W. Wei, \emph{A $\sigma_{2}$ Penrose
inequality for conformal asymptotically hyperbolic 4-discs}, arxiv2003.02875.

\bibitem[GNN]{GNN}B. Gidas, W. N. Ni, and L. Nirenberg, \emph{Symmetry
and related properties via the maximum principle}, Comm. Math. Phys.
68 (1979), no. 3, 209-243. 

\bibitem[G0]{G0} M. Gonz\'{a}lez, \emph{Singular sets of a
class of fully non-linear equations in conformal geometry}, Ph.D thesis,
Princeton University, 2004.

\bibitem[G1]{G1} M.Gonz\'{a}lez, \emph{Singular sets of a
class of locally conformally flat manifolds}, Duke Math. J. 129 (2005),
551--572.

\bibitem[G2]{G2} M. Gonz\'{a}lez, \emph{Removability of singularities
for a class of fully non-linear equations}, Calculus of Variations
and Partial Differential Equations 27 (2006), 439--466.

\bibitem[GMS]{GMS} M. Gonz\'{a}lez, R. Mazzeo and Y. Sire,
\emph{Singular Solutions of Fractional Order Conformal Laplacians},
J Geom Anal (2012) 22, 845-863.

\bibitem[GMM]{GMM}M. Gonz\'{a}lez, L. Mazzieri and A. Muixi,
\emph{Constant $\text{\ensuremath{\sigma_{2}}}$ curvature metrics
with non isolated singularities}, preprint.

\bibitem[GW1]{GW1} P. Guan and G. Wang,\emph{ Local estimates for
a class of fully nonlinear equations arising from conformal geometry},
International Mathematics Research Notices, V. 2003, Issue 26 (2003),
1413-1432.

\bibitem[GLW]{GLW} P. Guan, C.S. Lin, G. Wang, \emph{Schouten tensor
and some topological properties}, Commun. Anal. Geom. 13 (2005), 887--902.

\bibitem[PVW]{PVW} P. Guan, J. Viaclovsky and G. Wang, \emph{Some
properties of the Schouten tensor and applications to conformal geometry},
Transactions of American Math. Society, 355 (2003), 925-933.

\bibitem[GV0]{GV0} M. Gursky, J. Viaclovsky, \emph{A new variational
characterization of three-dimensional space forms}, Invent. Math.
145 (2001), 251--278.

\bibitem[GV3]{GV3} M. Gursky, J. Viaclovsky, \emph{Convexity and
singularities of curvature equations in conformal geometry}, Int.
Math. Res. Not. (2006), 96890-96890.

\bibitem[GV4]{GV4} M. Gursky, J.Viaclovsky, \emph{Prescribing symmetric
functions of the eigenvalues of the Ricci tensor}, Ann. of Math. 166
(2007), 475--531.

\bibitem[GV5]{GV5} M. Gursky, J. Viaclovsky,\emph{ Volume comparison
and the $\sigma_{k}$-Yamabe problem}, Advances in Mathematics 187
(2004), 447--487.

\bibitem[H]{H} Z. Han, \emph{Local pointwise estimates for solutions
of the $\sigma_{2}$ curvature equation on 4-manifolds}, Int. Math.
Res. Not. 79 (2004), 4269--4292.

\bibitem[H1]{H1} Z. Han, \emph{A Kazdan-Warner type identity for
the $\sigma_{k}$ curvature}, C. R. Math. 342(7), 475--478 (2006).

\bibitem[HLT]{HLT-1} Z. Han, Y.Y. Li, E. Teixeira, \emph{Asymptotic
behavior of solutions to the $\sigma_{k}$-Yamabe equation near isolated
singularities}, Invent math. 182 (2010), 635-684.

\bibitem[KMPS]{KMPS} N. Korevaar, R. Mazzeo, F. Pacard and R. Schoen,
\emph{Refined asymptotics for constant scalar curvature metrics with
isolated singularities}, Invent. Math. 135 (1999), 233--272.

\bibitem[LL1]{LL1} A. Li, Y.Y. Li, \emph{On some conformally invariant
fully nonlinear equations}, Commun. Pure Appl. Math. 56 (2003), 1414--1464.

\bibitem[LL2]{LL2} A. Li, Y.Y. Li, \emph{On some conformally invariant
fully nonlinear equations, Part II: Liouville, Harnack and Yamabe},
Acta Math. 195 (2005), 117--154.

\bibitem[L]{L} Y.Y. Li, \emph{Conformally invariant fully nonlinear
elliptic equations and isolated singularities}, J. Funct. Anal. 233
(2006), 380--425.

\bibitem[L1]{L1} Y.Y. Li, \emph{Local gradient estimates of solutions
to some conformally invariant fully nonlinear equations,} C. R. Math.
Acad. Sci. Paris 343 (2006), 249--252.

\bibitem[LN]{LN} Y.Y. Li, L. Nguyen, \emph{A compactness theorem
for a fully nonlinear Yamabe problem under a lower Ricci curvature
bound}, Journal of Functional Analysis 266 (2014), 3741-3771.

\bibitem[LNW]{LNW}Y.Y. Li, L. Nguyen, B. Wang, \emph{On the $\sigma_{k}$-Nirenberg
problem, arXiv:2008.08437.}

\bibitem[MN]{MN}L. Mazzieri and C. B. Ndiaye, \emph{Existence of
solutions for the singular $k$\textminus Yamabe problem}, preprint. 

\bibitem[LSe]{LSe} L. Mazzieri and A. Segatti, \emph{Constant k-curvature
metrics with Delaunay type ends}, Adv. Math. 229 (2012), no. 6, 3147--3191.

\bibitem[N]{N} L. Nirenberg, \emph{Topics in Nonlinear Functional
Analysis}, Lecture Notes, Courant Inst. Math. Sci., New York, (1974).

\bibitem[Pollack]{Pollack}D. Pollack, \emph{Compactness results for
complete metrics of constant positive scalar curvature on subdomains
of $S^{n}$}, Indiana Univ. Math. J. 42 (1993), 1441--1456.

\bibitem[QR]{QR}J. Qing and D. Raske, \emph{On positive solutions
to semilinear conformally invariant equations on locally conformally
at manifolds}, Int. Math. Res. Not., pages Art. ID 94172, 20, 2006.

\bibitem[Sc1]{Sc1} R. Schoen, \emph{Conformal deformation of a Riemannian
metric to constant scalar curvature}, J. Differ. Geom. 20 (1984),
479--495. 

\bibitem[Sc2]{Sc2} R. Schoen, \emph{The existence of weak solutions
with prescribed singular behavior for a conformally invariant scalar
equation,} Commun. Pure Appl. Math. 41, 317--392 (1988).

\bibitem[Sc0]{Sc0} R. Schoen, \emph{On the number of constant scalar
curvature metrics in a conformal class}. In Differential geometry,
volume 52 of Pitman Monogr. Surveys Pure Appl. Math., pages 311-320.
Longman Sci. Tech., Harlow, 1991.

\bibitem[STW]{STW} W. M. Sheng, N. Trudinger and X. J. Wang, \emph{The
Yamabe problem for higher order curvatures}, J. Differ. Geom. 77 (2007),
515--553.

\bibitem[SS]{SS} A. Silva Santos, S\emph{olutions to the singular
$\sigma_{2}$-Yamabe problem with isolated singularities}, Indiana
Univ. Math. J. 66 (2017), 741--790.

\bibitem[Tru]{Tru} N. Trudinger, \emph{Remarks concerning the conformal
deformation of Riemannian structures on compact manifolds}, Ann. Sc.
Norm. Super. Pisa, Cl. Sci. 22 (1968), 265--274.

\bibitem[TW09]{TW09}N. Trudinger, and X. J. Wang, \emph{On Harnack
inequalities and singularities of admissible metrics in the Yamabe
problem}, Calc. Var. (2009) 35, 317--338.

\bibitem[TW]{TW} N. Trudinger, and X. J. Wang, \emph{The intermediate
case of the Yamabe problem for higher order curvatures}, Int. Math.
Res. Not. IMRN (2010), 2437--2458.

\bibitem[V]{Viac2000}J. Viaclovsky, \emph{Some fully nonlinear equations
in conformal geometry. In: Differential Equations and Mathematical
Physics}, Birmingham, AL, 1999. AMS/IP Stud. Adv. Math., vol. 16,
pp. 425--433. Am. Math. Soc., Providence (2000).

\bibitem[V1]{V1} J. Viaclovsky, \emph{Conformal geometry, contact
geometry, and the calculus of variations}, Duke Math. J. 101 (2000),
283--316.

\bibitem[V4]{V4} J. Viaclovsky, \emph{Conformally invariant Monge-Ampère
partial differential equations: global solutions}, Trans. Am. Math.
Soc. 352 (2000), 4371--4379.

\bibitem[W1]{W1}X. J. Wang, \emph{A priori estimates and existence
for a class of fully nonlinear elliptic equations in conformal geometry},
Chinese Ann. Math. Ser. B 27 (2006), 169--178.

\bibitem[Y]{Y} H. Yamabe, \emph{On a deformation of Riemannian structures
on compact manifolds}, Osaka Math. J. 12 (1960), 21--37.
\end{thebibliography}
\end{document}